\def\è{\`e}
\def\CC{{\mathbb{C}}}
\def\QQ{{\mathbb{Q}}}
\def\RR{{\mathbb{R}}}
\def\ZZ{{\mathbb{Z}}}
\def\KK{{\mathbb{K}}}
\def\FF{{\mathbb{F}}}
\def\LL{{\mathbb{L}}}
\def\tdeg{{\textrm{tdeg }}}
\newtheorem{Thm}{Theorem}
\newtheorem{prop}[Thm]{Proposition}
\newtheorem{lem}[Thm]{Lemma}
\newtheorem{cor}[Thm]{Corollary}
\newtheorem{rem}[Thm]{Remark}
\theoremstyle{definition}
\newtheorem{defn}[Thm]{Definition}
\newtheorem{exmp}[Thm]{Example}
\title{Modular Las Vegas Algorithms  for Polynomial Absolute Factorization}
\date{\ }
\author{Cristina Bertone$\phantom{,}^{a,b}$, Guillaume Ch\`eze$\phantom{,}^{c}$, Andr\'e Galligo$\phantom{,}^{a}$\\
\ \\
\small{$\phantom{,}^{a}$Laboratoire J.-A. Dieudonn\'e, Universit\'e de Nice - Sophia Antipolis, France}\\
  \small{$\phantom{,}^{b}$Dipartimento di Matematica, Universit\`a degli Studi di Torino, Italy}\\
\small{$\phantom{,}^{c}$Institut de Math\'ematiques de Toulouse, Universit\'e Paul Sabatier Toulouse 3, France}
}
\begin{document}


\maketitle

\begin{abstract}
Let $f(X,Y) \in \ZZ[X,Y]$ be an irreducible polynomial over $\QQ$. We  give a Las Vegas absolute  irreducibility test based on a property of the Newton polytope of $f$, or more precisely, of $f$ modulo some prime integer $p$. The
same idea of choosing a $p$ satisfying some prescribed properties together with $LLL$ is used to provide a new strategy for absolute factorization of $f(X,Y)$. We present our approach in the bivariate case but the techniques extend to the multivariate case.
Maple computations show that it is efficient and promising as we are able to factorize some polynomials of degree up to 400.
\end{abstract}

\textbf{Keywords}:  Absolute factorization, modular computations, LLL algorithm, Newton polytope.


\section*{Introduction}
Kaltofen's survey papers \citep{K2} related the early success story of polynomial factorization. Since then, crucial progresses  
have been achieved : algorithms developed and implemented by Van Hoeij and his co-workers in the univariate case \citep{Bel}, by Gao and his co-workers (see for instance \citet{Gao1}), then by Lecerf and his co-workers in the multivariate case (\citet{BLSSW}, \citet{Lec07}). \citet{Che1}, \citet{CheLec} and \citet{Lec07} also improved drastically the multivariate absolute factorization (i.e. with coefficients in the algebraic closure): they  produced an algorithm with the best known arithmetic complexity.
Even if the situation evolved  rapidly, there is still room for improvements and new points of view.

 Here, we focus on absolute factorization of rationally irreducible polynomials with integer coefficients (see \citet{CG1}, \citet{Rupprecht2}, \citet{svw2} and the references therein).
For such polynomials, the best current algorithm and implementation is Ch\`eze's  (\citet{Che}, \citet{Che1}) presented at Issac'04, it is based on semi-numerical computation, uses LLL and is implemented in Magma. It can factorize  polynomials of high degrees, up to 200. One of the challenges is to improve its capabilities at least in certain situations. 

We propose yet another strategy and algorithm to deal with (multivariate) absolute irreducibility test and factorization.
This article will present a simple, but very efficient,  irreducibility test. Then we extend our strategy  to get a  factorization algorithm based on modular computations, Hensel liftings and recognition of algebraic numbers via  $p-$adic approximation using $LLL$ (as explained in \citet{VZG}).\\
Our absolute factorization algorithm can be viewed as a drastic improvement of the classical algorithm TKTD (see \citet{TrDv}, \citet{K1}, \citet{Trager2} and Section \ref{terzasez}). Indeed, we replace the computations
in an algebraic extension of $\QQ$ of degree $n$, the degree of the input polynomial, by computations in an extension of the minimal degree $s$, the number of factors of the input polynomial.

We made a preliminary implementation in Maple and computed several examples. It is very promising  as it is fast and able to compute the researched algebraic extension for high degree polynomials
(more than degree 200, see last section). The bottleneck of the procedure is now the final $x$-adic Hensel lifting, but we may avoid this problem with a parallel version of our algorithm, as explained in Section \ref{parallVers}.

In other words,  our approach improve the practical complexity of absolute factorization of polynomials with integer coefficients.

\subsection*{Notations}
\noindent $\KK$ is a perfect field, $\overline{\KK}$ is an algebraic closure of $\KK$.\\
$\FF_p=\ZZ/p\ZZ$ is the finite field with $p$ elements, where $p$ is a prime integer.\\
$\tdeg f$ is the total degree of the polynomial $f$.

\section{Absolute irreducibility test and Newton Polytope}\label{secone}

Any implementation of an absolute factorization algorithm needs to  first check  if the polynomial is ``trivially'' absolutely irreducible. That is to say, test quickly a sufficient condition on $f$: when the test says yes, then $f$ is absolutely irreducible and the factorization algorithm can be spared. The test should be fast and should, in  ``most'' cases (i.e. with a good probability)  say yes when the polynomial $f$ is irreducible. For instance, for polynomials of degree 100, one might expect that such a test runs 100 time faster than a good general factorization algorithm. This is indeed the case for the test presented in this section: for a polynomial of degree 100,  absolute factorization algorithms (e.g. the ones in  \citet{Che} and \citet{CheLec}) require 20 seconds to decide irreducibility  while our test answers after only  0.07 seconds.\par

 The absolute irreducibility test presented in this article is based on properties of the Newton polytope of a polynomial that we now review.

\begin{defn}
Let $f(X,Y)=\sum_{i,j}c_{i,j}X^iY^j \in \KK[X,Y]$. The Newton polytope of $f$, denoted by $P_f$, is the convex hull in $\RR^2$ of all the points $(i,j)$ with $c_{i,j} \neq 0$.

A point $(i,j)$ is a \emph{vertex} of $P_f$ if it is not on the line
segment of any other two points of the polytope. \qed
\end{defn}

Remember that a polytope is the convex hull of its vertices.

We refer to \citet{Gao2}  for basic results on absolute irreducibility and Newton polytopes and also for an interesting short history which goes back to the famous Eisenstein criterion.

\begin{defn}
Denote by $(i_1,j_1)$, \ldots, $(i_l,j_l) \in \ZZ^2$ the vertices of $P_f$.
We say that condition $(C)$ is satisfied when $\gcd(i_1,j_1,\ldots,i_l,j_l)=1$. \qed
\end{defn}

The aim of this section is to prove the following criterion.

\begin{prop}[Absolute irreducibility criterion]\label{test}\-

Let $f(X,Y)$ be an irreducible polynomial in $\KK[X,Y]$. 
If   condition $(C)$ is satisfied then $f$ is absolutely irreducible.
\end{prop}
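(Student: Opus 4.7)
The plan is to exploit the classical fact (due to Ostrowski) that the Newton polytope behaves multiplicatively: for any $g,h \in \overline{\KK}[X,Y]$,
\[
P_{gh} \;=\; P_g + P_h
\]
where the right-hand side is the Minkowski sum. Granting this, the argument is short and essentially combinatorial.

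First I would set up the absolute factorization of $f$. Since $f$ is irreducible over $\KK$, its factorization over $\overline{\KK}$ has the form $f = c\,g_1 \cdots g_s$ where the $g_i$ are the conjugates of a single absolutely irreducible factor $g_1$ under the Galois group of the splitting field over $\KK$. The key observation is that Galois conjugation acts only on the coefficients and fixes the monomial support: a coefficient is zero iff all of its Galois conjugates are zero. Therefore all the $g_i$ have the same support and, in particular, the same Newton polytope $P_{g_1}$.

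Applying Ostrowski's theorem repeatedly then gives
\[
P_f \;=\; P_{g_1} + \cdots + P_{g_s} \;=\; s\cdot P_{g_1},
\]
the $s$-fold Minkowski dilation of $P_{g_1}$. Now the vertices of $s\cdot P_{g_1}$ are exactly $s\cdot v$ for $v$ a vertex of $P_{g_1}$ (scaling a polytope by a positive integer scales its vertices). Consequently every vertex coordinate of $P_f$ is divisible by $s$, so $s$ divides $\gcd(i_1,j_1,\ldots,i_l,j_l)$. Under condition $(C)$ this gcd equals $1$, forcing $s=1$, i.e. $f$ is absolutely irreducible.

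The only technical step that requires care is the statement of Ostrowski's theorem and the identification of the vertices of $s\cdot P_{g_1}$; both are standard and can simply be cited from \citet{Gao2}. The observation that Galois-conjugate polynomials share the same Newton polytope is the conceptual heart of the argument, but it is essentially immediate once stated. I do not foresee a genuine obstacle here; the proposition is really a clean corollary of Ostrowski combined with a Galois-invariance remark.
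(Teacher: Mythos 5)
Your argument is correct and follows essentially the same route as the paper: both invoke Ostrowski's theorem to get $P_f = s\cdot P_{f_1}$ and then read off that every vertex coordinate of $P_f$ is divisible by $s$, contradicting condition $(C)$. The only cosmetic difference is how the equality of the absolute factors' Newton polytopes is justified — you argue it directly via Galois invariance of the support, whereas the paper deduces it from its Lemma~\ref{lem_fonda}; your phrasing also handles the leading constant and the non-monic case slightly more carefully, but the substance is identical.
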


Our statement in Proposition \ref{test} bears similarities with one of  Gao's result \citep{Gao2}; but it differs since Gao assumed that $P_f$ should be contained in a triangle when we assume that $f$ is irreducible in $\KK[X,Y]$. Although, our condition seems a strong theoretical hypothesis, in practice we can check it very quickly thanks to the algorithms developed in \citet{BLSSW} and \citet{Lec04}. The advantage of our criterion is that it applies  to a larger variety of polytopes.

 We first recall an important lemma about absolute factorization of (rationally) irreducible polynomials.

\begin{lem}\label{lem_fonda}
Let $f \in \KK[X,Y]$ be an irreducible polynomial in $\KK[X,Y]$, monic in $Y$:
$$f(X,Y)=Y^n+\sum_{k=0}^{n-1}\sum_{i+j=k} a_{i,j} X^i Y^j.$$ 
Let  $ f=f_1 \cdots f_s$ be the monic factorization of $f$ by irreducible polynomials $f_l$ in $\overline{\KK}[X,Y]$. Denote by  $\LL=\KK[\alpha]$ the extension of $\KK$ generated by all the coefficients of $f_1$. Then each $f_l$ can be written: 
\begin{equation}\label{bfattass}
f_l(X,Y)=Y^m+\sum_{k=0}^{m-1}\sum_{i+j=k} a_{i,j}^{(l)}X^i Y^j= Y^m+\sum_{k=0}^{m-1}\sum_{i+j=k} b_{i,j}(\alpha_l) X^i Y^j,
\end{equation}
where $b_{i,j} \in \KK[Z]$, $\deg_Z (b_{i,j}) < s$  and where $\alpha_1,\ldots,\alpha_s$ are the different conjugates over $\KK$ of $\alpha = \alpha_1$. \qed
\end{lem}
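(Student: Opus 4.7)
The plan is to exploit the action of $\text{Gal}(\overline{\KK}/\KK)$ on the absolute factorization. Since $f$ has coefficients in $\KK$, every Galois automorphism $\sigma$ must permute the monic factors in the factorization $f=f_1\cdots f_s$, and the irreducibility of $f$ in $\KK[X,Y]$ forces this action to be transitive: any orbit's product would otherwise define a non-trivial $\KK$-rational monic divisor of $f$. All $f_l$ are therefore Galois conjugates of one another, so they share a common degree $m = n/s$.

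Next, I would set $\LL = \KK(\alpha)$, choosing $\alpha$ as a primitive element for the extension generated by the coefficients of $f_1$ (existence of a primitive element is guaranteed because $\KK$ is perfect). The crucial step is to identify the stabilizer of $f_1$ with the stabilizer of $\alpha$ inside $\text{Gal}(\overline{\KK}/\KK)$. One inclusion is immediate since $\alpha$ lies in $\LL$, which is generated by the coefficients of $f_1$; conversely, fixing $\alpha$ fixes $\LL = \KK[\alpha]$, hence every coefficient of $f_1$, hence $f_1$ itself. By orbit--stabilizer, the orbit of $\alpha$ has the same size as the orbit of $f_1$, namely $s$, so $[\LL:\KK] = s$ and $\alpha$ has exactly $s$ distinct conjugates $\alpha_1,\ldots,\alpha_s$ over $\KK$.

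Using the $\KK$-basis $\{1,\alpha,\ldots,\alpha^{s-1}\}$ of $\LL$, each coefficient of $f_1$ admits a unique expression $a_{i,j}^{(1)} = b_{i,j}(\alpha)$ with $b_{i,j}\in\KK[Z]$ of degree less than $s$. To treat an arbitrary factor $f_l$, I would pick $\sigma_l \in \text{Gal}(\overline{\KK}/\KK)$ with $\sigma_l(f_1)=f_l$. The stabilizer equality established above ensures that $\alpha_l := \sigma_l(\alpha)$ depends only on $f_l$, not on the choice of $\sigma_l$, and that $f_l \mapsto \alpha_l$ is a bijection onto the set of $\KK$-conjugates of $\alpha$. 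Since the coefficients of $b_{i,j}$ lie in $\KK$ and are fixed by $\sigma_l$, applying $\sigma_l$ to $a_{i,j}^{(1)} = b_{i,j}(\alpha)$ yields $a_{i,j}^{(l)} = b_{i,j}(\alpha_l)$, which is exactly the claimed expression.

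The main obstacle --- indeed, the only real content of the argument --- is proving the stabilizer equality, as this simultaneously pins down $[\LL:\KK] = s$ and the correct indexing of the factors $f_l$ by the conjugates $\alpha_l$. Everything else reduces to the functoriality of applying a field automorphism coefficient-wise to a polynomial and to routine manipulation of the power basis of $\LL/\KK$.
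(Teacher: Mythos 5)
The paper itself gives no proof of this lemma; it simply refers the reader to Lemma 2.2 of Rupprecht's paper. Your Galois-theoretic argument is the standard route for this result and is carried out correctly and completely: transitivity of $\mathrm{Gal}(\overline{\KK}/\KK)$ on the monic absolute factors follows from rational irreducibility; the primitive element theorem applies because $\KK$ is perfect; the identification of the stabilizer of $f_1$ with that of $\alpha$, together with orbit--stabilizer, yields $[\LL:\KK]=s$ and the bijection $f_l\mapsto\alpha_l$; and the power-basis representation $a_{i,j}^{(1)}=b_{i,j}(\alpha)$ transports under $\sigma_l$ to give $a_{i,j}^{(l)}=b_{i,j}(\alpha_l)$ since $b_{i,j}$ has coefficients fixed by $\sigma_l$.
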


See \citep[Lemma 2.2]{Rupprecht2} for a proof.

As a corollary the number of absolute factors is equal to $[\LL:\KK]$.

In order to prove Proposition \ref{test}, we introduce the Minkowski sum  and its pro\-per\-ties concerning polytopes.

\begin{defn}
If $A_1$ and $A_2$ are two subsets of the vector space $\RR^n$, we define their \emph{Minkowski sum} as
\[
A_1+A_2=\{a_1+a_2\, | \, a_1\in A_1,a_2 \in A_2\}.\qed
\]
\end{defn}

\begin{lem}[Ostrowski] Let $f, g,h \in \KK[X_1,X_2,\dots, X_n]$ with $f = gh$. Then
$P_f =P_g+P_h$.
\end{lem}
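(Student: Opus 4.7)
The plan is to prove the two inclusions $P_f \subseteq P_g + P_h$ and $P_g + P_h \subseteq P_f$ separately. For the first inclusion, which is the purely formal direction, I would unpack the product: if $X^a$ is a monomial in the support of $f = gh$, then the coefficient of $X^a$ is a sum of products of coefficients of monomials $X^b$ in $g$ and $X^c$ in $h$ with $b+c = a$, so at least one such pair must have both coefficients nonzero. Hence $\textrm{supp}(f) \subseteq \textrm{supp}(g) + \textrm{supp}(h)$. Taking convex hulls on both sides and using the standard fact that the convex hull of a Minkowski sum of finite sets equals the Minkowski sum of their convex hulls, I get $P_f \subseteq P_g + P_h$.

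The reverse inclusion is the one that actually has content. Since $P_g + P_h$ is the convex hull of its vertices, it suffices to show every vertex $v$ of $P_g + P_h$ lies in $P_f$. The key geometric lemma I would invoke (or prove inline, as it is short) is: if $v$ is a vertex of $P_g + P_h$, there is a linear functional $\ell : \RR^n \to \RR$ attaining its maximum on $P_g + P_h$ uniquely at $v$, and then $\ell$ attains its maximum on $P_g$ uniquely at some vertex $v_g$ and on $P_h$ uniquely at some vertex $v_h$, with $v = v_g + v_h$ and this decomposition unique among pairs $(b,c) \in \textrm{supp}(g) \times \textrm{supp}(h)$ with $b+c=v$.

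Once the uniqueness of the decomposition is in hand, the coefficient of $X^v$ in $f = gh$ is exactly the product of the coefficient of $X^{v_g}$ in $g$ and the coefficient of $X^{v_h}$ in $h$, with no other contribution and hence no cancellation possible. Since both those coefficients are nonzero (being coefficients of monomials in the supports of $g$ and $h$), the coefficient of $X^v$ in $f$ is nonzero, so $v \in \textrm{supp}(f) \subseteq P_f$. Combining the two inclusions yields $P_f = P_g + P_h$.

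The main obstacle is the uniqueness argument in the middle paragraph: one must rule out any \emph{other} way of writing $v$ as $b+c$ with $b \in \textrm{supp}(g)$ and $c \in \textrm{supp}(h)$, since such additional contributions could in principle cancel. This is precisely where the vertex hypothesis on $v$ (and not merely membership in the polytope) is essential, and where the supporting-hyperplane argument via the functional $\ell$ does the real work; any $(b,c) \neq (v_g, v_h)$ with $b+c=v$ would force $\ell(b) + \ell(c) = \ell(v)$, contradicting the strict maximality of $\ell$ at $v_g$ on $P_g$ or at $v_h$ on $P_h$.
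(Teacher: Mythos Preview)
Your argument is correct and complete: the supporting-hyperplane reasoning cleanly establishes that every vertex of $P_g+P_h$ arises from a \emph{unique} pair $(v_g,v_h)$ in the supports, so the corresponding coefficient in $f$ is a single nonzero product with no cancellation. One minor simplification: for the easy inclusion you do not need the identity $\mathrm{conv}(A+B)=\mathrm{conv}(A)+\mathrm{conv}(B)$; it suffices that $\mathrm{supp}(f)\subseteq \mathrm{supp}(g)+\mathrm{supp}(h)\subseteq P_g+P_h$ and that the latter set is already convex.

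As for comparison with the paper: the paper does not actually prove this lemma. It attributes the result to Ostrowski and refers the reader to the original source, so your write-up goes well beyond what the authors supply. Your proof is in fact the standard modern argument and is exactly what a reader following up the citation would find.
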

\begin{proof} 
See \citet{Ost}. 
\end{proof}

In particular \citep{Schneider}, if we sum up $s$ times the same convex polytope $A$, then we have that
\[
\underbrace{A+\cdots+A}_{s-times}=s\cdot A,
\]
where $s\cdot A=\{s\cdot v \, | \, v \in A\}$. Furthermore the vertices $\{v_1,\dots, v_l\}$ of $s\cdot A$ are exactly $v_i=s\cdot w_i$, where $\{w_1,\dots,w_l\}$ is the set of vertices of $A$.

\medskip

We  now consider the  irreducible polynomial $f\in \KK[X,Y]$ and its absolute factors $f_1,\dots,f_s\in \overline \KK[X,Y]$. Observe that thanks to Lemma \ref{lem_fonda}, we have that $P_{f_i}=P_{f_j}$ for every couple of indexes $i,j \in \{1,\dots,s\}$.

We can then easily prove Proposition \ref{test}.
\begin{proof}
Suppose that  $f$ is not absolutely irreducible. 
Let $f_1,\dots, f_s$ be the absolute factors of $f$. For what concerns the Newton polytopes, we have that
\[
P_f=P_{f_1}+\cdots+P_{f_s}=s\cdot P_{f_1}.
\]
Suppose in particular that the vertices of $P_{f_1}$ are $\{(i_1,j_1),\dots,(i_l,j_l)\}$. Then we have that the vertices of $P_f$ are $\{(s\cdot i_1,s\cdot j_1),\dots,(s\cdot i_l,s\cdot j_l)\}$. But then condition $(C)$ is not satisfied.
\end{proof}

\begin{cor}
The number of absolute irreducible factors of a ra\-tio\-nal\-ly irreducible polynomial 
 $f(X,Y) \in \KK[X,Y]$ divides $\gcd(i_1,j_1,\ldots,i_l,j_l)$.
\end{cor}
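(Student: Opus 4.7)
The plan is to reuse the structural identity established in the proof of Proposition \ref{test}, which already contains essentially all the needed information. Let $s$ denote the number of absolute irreducible factors of $f$, and let $f = f_1 \cdots f_s$ be the absolute monic factorization. I would first invoke Lemma \ref{lem_fonda} to observe that the factors $f_1,\dots,f_s$ are Galois conjugates, so in particular their Newton polytopes coincide: $P_{f_1} = \cdots = P_{f_s}$.

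Next, I would apply Ostrowski's lemma iteratively to express $P_f$ as the Minkowski sum $P_{f_1} + \cdots + P_{f_s}$, and then use the equality $\underbrace{A + \cdots + A}_{s\text{-times}} = s\cdot A$ (recalled just after Ostrowski) to conclude $P_f = s \cdot P_{f_1}$. The key combinatorial fact I would then use is the description of the vertices of a dilated polytope: if $\{w_1,\dots,w_l\}$ are the vertices of $P_{f_1}$, then the vertices of $P_f = s \cdot P_{f_1}$ are exactly $\{s\cdot w_1,\dots,s\cdot w_l\}$.

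From this it follows that every vertex coordinate of $P_f$ is a multiple of $s$, so $s$ divides each of $i_1,j_1,\dots,i_l,j_l$, hence divides their gcd. This gives the desired conclusion. There is no real obstacle here — the corollary is essentially a quantitative refinement of Proposition \ref{test}, reading the divisibility of the gcd directly off the same identity $P_f = s \cdot P_{f_1}$ that was used to derive the contradiction in the proposition. I would write the proof as a short two- or three-line argument citing Lemma \ref{lem_fonda}, Ostrowski's lemma, and the vertex description of $s \cdot P_{f_1}$.
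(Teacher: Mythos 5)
Your proposal is correct and follows exactly the route the paper takes: the paper's proof of the corollary simply points back to the proof of Proposition~\ref{test}, which contains the identity $P_f = s\cdot P_{f_1}$ and the observation that the vertices of $P_f$ are the $s$-dilates of the vertices of $P_{f_1}$. Reading off that $s$ divides each vertex coordinate, and hence the gcd, is precisely the intended argument.
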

\begin{proof}
This is a consequence of the proof of Proposition \ref{test}.
\end{proof}
As all the arguments we used in this section extend to Newton polytopes in any number of variables we get:

\begin{cor}
Proposition \ref{test} holds for a polynomial ring with any number of variables. \qed
\end{cor}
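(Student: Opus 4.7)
The plan is to observe that every ingredient used in the bivariate proof of Proposition \ref{test} is purely combinatorial/convex-geometric and therefore dimension-agnostic. So rather than reprove anything, I would spell out the translation of the argument to $n$ variables and check each lemma still applies.

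First, I would set up the notation: for $f\in\KK[X_1,\dots,X_n]$, $P_f\subset\RR^n$ is the convex hull of the exponent vectors $(i_1,\dots,i_n)\in\ZZ^n$ with nonzero coefficient, and condition $(C)$ becomes that the gcd of all coordinates of all vertices of $P_f$ equals $1$. Then I would note three points. (i) Lemma \ref{lem_fonda} is already stated (in the cited reference \citet{Rupprecht2}) for an arbitrary number of variables, so for any rationally irreducible $f$ with absolute factors $f_1,\dots,f_s$, all the $P_{f_l}$ coincide. (ii) Ostrowski's lemma $P_{gh}=P_g+P_h$ is stated in the excerpt for $\KK[X_1,\dots,X_n]$, hence holds as is. (iii) The elementary fact $\underbrace{A+\cdots+A}_{s}=s\cdot A$ with vertices $s\cdot w_i$ is a statement about convex polytopes in $\RR^n$, valid in arbitrary dimension (this is what \citet{Schneider} is cited for).

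With these three facts in hand, the proof of the bivariate Proposition \ref{test} transcribes verbatim: if $f=f_1\cdots f_s$ with $s\geq 2$ is the absolute factorization, then by (i) and (ii)
\[
P_f = P_{f_1}+\cdots+P_{f_s} = s\cdot P_{f_1},
\]
so by (iii) each vertex of $P_f$ is $s$ times a lattice vertex of $P_{f_1}$, whence $s$ divides the gcd of the coordinates of every vertex of $P_f$, contradicting $(C)$. In particular, the corresponding corollary on the number of factors dividing $\gcd(i_1,j_1,\dots)$ also generalizes.

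I do not foresee a real obstacle: the only thing to be careful about is that Lemma \ref{lem_fonda} really is available in the multivariate setting (one can pick a generic linear form to monicize in one variable and apply the bivariate-style splitting-field argument coefficient-by-coefficient, exactly as in \citet{Rupprecht2}). Once that is granted, the proof is a one-line invocation of the same chain of equalities, and no new combinatorial input is needed.
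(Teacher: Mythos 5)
Your proposal is correct and matches the paper's reasoning: the paper simply remarks that all the arguments of the section extend verbatim to Newton polytopes in $\RR^n$, which is exactly the observation you spell out (dimension-agnosticity of Lemma \ref{lem_fonda}, Ostrowski, and the $s\cdot A$ dilation fact). You have merely made the one-line remark explicit, which is fine but adds nothing beyond the paper's intent.
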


\section{Evaluation of our irreducibility criterion}
In Proposition \ref{test}, we established the validity of our criterion. In this section we address the natural question:
does condition $(C)$ happens frequently ?

When the polynomial $f$ is dense, then the coordinates of the vertices of $P_f$ are $(0,0)$, $(n,0)$, $(0,n)$, thus condition $(C)$ is not satisfied and we cannot apply our test.
However when $f$ is sparse, in  ``most'' cases, the Newton polytope is not the triangle of the previous situation and  a direct use of Proposition  \ref{test} can quickly detect if $f$ is absolutely irreducible.

We first provide time tables and statistic evidences of the efficiency of our criterion applied  to a sparse polynomial   $f(X,Y) \in \ZZ[X,Y]$. Then we consider its application to dense polynomials. In that case, modular computations are used to force a sparsity  condition on a reduced polynomial modulo some prime $p$.
\subsection{ Statistics for a direct use of the test for sparse polynomials}

To check the previous claim, we have constructed randomly $1000$ polynomials of total degree $n$ and applied our test. Our test is implemented in Magma and available at: http://www.math.univ-toulouse.fr/$\sim$cheze/

The following table presents the obtained statistical results.

The entries are the degree $n$ and  a sparsity indicator $Prop$. When its value is $Prop=1$ (respectively $Prop=2$), each polynomial has about $n(n+1)/4$ (respectively $n(n+1)/6$) non-zero coefficients randomly chosen in $[-10^{12};10^{12}]$ and $n(n+1)/4$ (respectively $n(n+1)/3$)   coefficients randomly chosen equal to zero. The outputs are: 
the number $Success$ of absolute irreducible polynomials detected by our test, and  the average running time  $T_{av}$  (in second).

\begin{center}\label{tableau_test_polytop}
\begin{tabular}{|c|c|c|c|c|c|}
\hline
$n$ & $Prop$  & $Success$ & $T_{av}$\\
\hline
\hline
\hline
50 & 1& 819 & 0.0134\\
\hline
50 & 2 & 943 & 0.0122\\
\hline
100 & 1 & 832 & 0.0787\\
\hline
200 & 1 &  849 & 0.6023 \\
\hline
200 & 2 & 948  & 0.4432 \\
\hline
\end{tabular}\\
\end{center}

\vspace{0.1cm}

 This table shows that our test is well suited for sparse polynomials.

\subsection{Irreducibility test with modular computations}

Our aim is to construct  a sparse polynomial associated to  a dense polynomial, ``breaking'' its Newton polytope. 
For that purpose, we recall an easy corollary of Noether's irreducibility theorem. For a statement and some results about Noether's irreducibility theorem see e.g. \cite{Kaltof95}.

\begin{prop}\label{prop_mod_p}
Let $f(X,Y) \in \ZZ[X,Y]$ and $\overline{f}(X,Y)=f \mod p$, $\overline f \in \FF_p[X,Y]$.\\
 If \mbox{$\tdeg (f)= \tdeg (\overline{f})$} and $\overline{f}$ is absolutely irreducible, then $f$ is absolutely irreducible. \qed
\end{prop}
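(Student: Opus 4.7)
The plan is to prove the contrapositive: assume $f$ is not absolutely irreducible and exhibit a non-trivial factorization of $\overline{f}$ over $\overline{\FF_p}$. Write $f = g \cdot h$ in $\overline{\QQ}[X,Y]$ with $\tdeg g, \tdeg h > 0$, pick a finite extension $L/\QQ$ containing all coefficients of $g$ and $h$, and fix a prime ideal $\mathfrak{p}$ of the ring of integers $\mathcal{O}_L$ lying above $p$. Let $v_{\mathfrak{p}}$ be the associated discrete valuation on $L$, extended to $L[X,Y]$ by $v_{\mathfrak{p}}(P) = \min v_{\mathfrak{p}}(c)$ over the coefficients $c$ of $P$, and let $\FF_q = \mathcal{O}_L/\mathfrak{p}$.

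First, I would invoke the Gauss lemma in the valuation-theoretic form, which gives $v_{\mathfrak{p}}(gh) = v_{\mathfrak{p}}(g) + v_{\mathfrak{p}}(h)$. This allows a normalization: multiplying $g$ and $h$ by scalars in $L^{*}$, I obtain polynomials $g', h' \in \mathcal{O}_{L,\mathfrak{p}}[X,Y]$ with $v_{\mathfrak{p}}(g') = v_{\mathfrak{p}}(h') = 0$, so that the reductions $\overline{g'}, \overline{h'} \in \FF_q[X,Y]$ are non-zero. The rescaling satisfies $g'h' = \lambda f$ for some $\lambda \in L^{*}$ with $v_{\mathfrak{p}}(\lambda) = -v_{\mathfrak{p}}(f)$.

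Next I would use the hypothesis $\tdeg f = \tdeg \overline{f}$ to ensure $v_{\mathfrak{p}}(f) = 0$: if every coefficient of $f$ were divisible by $p$ (equivalently, by $\mathfrak{p}$), then the reduction of the degree-$n$ part of $f$ would vanish and $\tdeg \overline{f}$ would drop. Hence $\lambda$ is a $\mathfrak{p}$-adic unit, and reducing modulo $\mathfrak{p}$ yields the identity $\overline{g'} \cdot \overline{h'} = \overline{\lambda} \cdot \overline{f}$ in $\FF_q[X,Y]$, with $\overline{\lambda} \in \FF_q^{*}$.

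Finally I would verify that this factorization is genuinely non-trivial by chasing total degrees. Since $\FF_q$ is a field, total degree is additive under multiplication, so
\[
\tdeg \overline{g'} + \tdeg \overline{h'} \;=\; \tdeg \overline{f} \;=\; \tdeg f \;=\; \tdeg g + \tdeg h,
\]
while the inequalities $\tdeg \overline{g'} \leq \tdeg g$ and $\tdeg \overline{h'} \leq \tdeg h$ hold trivially. Equality is therefore forced throughout, so $\tdeg \overline{g'}, \tdeg \overline{h'} > 0$, contradicting the absolute irreducibility of $\overline{f}$ over $\FF_p$. The only delicate point in the argument is the normalization step, which requires extending the $p$-adic valuation to the number field $L$ and invoking Gauss's lemma in that setting; the rest is bookkeeping with total degrees using the hypothesis that reduction modulo $p$ preserves the top-degree form of $f$.
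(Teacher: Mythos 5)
Your proof is correct, but it takes a different route from the paper. The paper does not give a proof at all: it states the result as ``an easy corollary of Noether's irreducibility theorem'' and refers the reader to Kaltofen's survey. That route implicitly relies on the existence of Noether forms --- a finite set of integer polynomials $\Phi_1,\dots,\Phi_k$ in the coefficients of a degree-$n$ polynomial that vanish simultaneously exactly when the polynomial is absolutely reducible or drops degree --- so that a non-vanishing $\Phi_i(\overline f)$ over $\FF_p$ lifts to a non-vanishing $\Phi_i(f)$ over $\QQ$. You instead give a direct, self-contained contrapositive argument: embed the coefficients of a putative nontrivial factorization $f=gh$ over $\overline\QQ$ into a number field $L$, pick a prime $\mathfrak p$ of $\mathcal O_L$ over $p$, use the valuation-theoretic Gauss lemma to normalize $g$ and $h$ so that both reduce to nonzero polynomials over $\mathcal O_L/\mathfrak p$, and then observe that the hypothesis $\tdeg f=\tdeg\overline f$ forces degrees to be preserved on both sides, yielding a nontrivial factorization of $\overline f$ over a finite extension of $\FF_p$. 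The degree-chase at the end is the right place to use the degree hypothesis, and each step (the Gauss lemma, the unit normalization $g'h'=\lambda f$ with $v_{\mathfrak p}(\lambda)=0$, the degree inequalities collapsing to equalities) is sound. Your approach is more elementary and transparent than invoking Noether's theorem, at the modest cost of requiring familiarity with valuations on number fields; the paper's citation-based approach avoids any computation but rests on a much deeper result.
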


Now, even if $f$ is dense, the idea is to choose $p$ in order to force   $\overline{f}$ to be sparse. Then we  apply the test to $\overline{f}$ instead of applying it to $f$.

Let $a_1, \ldots, a_r$ be the coefficients corresponding to the vertices of $P_f$ and $L=[p_1, \ldots, p_l]$ be the list of the primes dividing at least one of the $a_i$. Remark that:
$$ \forall p_i \in L, \, P_f \neq P_{\,f \hspace{-0.2cm} \mod p_i}.$$
Thus even when $f$ is dense, if the coefficients $a_1, \ldots, a_r$ are not all equal to $1$, we can get polynomials $f \mod p_i$ such that $P_{\,\,f \mod p_i}$ is not the triangle with vertices $(0,0)$, $(0,n)$, $(n,0)$. In Section  \ref{test_irred_mod_p_chag_var}, we will see  that a linear change of coordinates permits to deal with the remaining case.

\emph{Example:} $f(X,Y)=Y^3+X^3+5X^2+3Y+2$. Figure \ref{fefmodp} clearly illustrates the effect of a reduction modulo $p=2$.
\begin{center}
\begin{figure}[h]
\begin{minipage}[c]{0.45\linewidth}
\centering
\setlength{\unitlength}{1cm}
\begin{picture}(5,5)

\put(1,1){\vector(1,0){4}}
\put(1,1){\vector(0,1){4}}

\put(2,0.925){\line(0,1){0.15}}
\put(3,0.925){\line(0,1){0.15}}
\put(4,0.925){\line(0,1){0.15}}
\put(0.925,2){\line(1,0){0.15}}
\put(0.925,3){\line(1,0){0.15}}
\put(0.925,4){\line(1,0){0.15}}

\put(0.5,0.5){{$0$}}
\put(2,0.5){{$1$}}
\put(3,0.5){{$2$}}
\put(4,0.5){{$3$}}
\put(0.5,2){{$1$}}
\put(0.5,3){{$2$}}
\put(0.5,4){{$3$}}
\put(5,0.5){{$X$}}
\put(0.5,5){{$Y$}}

\put(4,1){\circle*{.2}}
\put(1,4){\circle*{.2}}
\put(1,1){\circle*{.2}}

\thicklines\put(1,4){\line(1,-1){3}}
\thicklines\put(1,1){\line(1,0){3}}
\thicklines\put(1,1){\line(0,1){3}}
\end{picture}
\end{minipage}
\hfill
\begin{minipage}[c]{0.45\linewidth}
\centering
\setlength{\unitlength}{1cm}
\begin{picture}(5,5)
\put(1,1){\vector(1,0){4}}
\put(1,1){\vector(0,1){4}}

\put(2,0.925){\line(0,1){0.15}}
\put(3,0.925){\line(0,1){0.15}}
\put(4,0.925){\line(0,1){0.15}}
\put(0.925,2){\line(1,0){0.15}}
\put(0.925,3){\line(1,0){0.15}}
\put(0.925,4){\line(1,0){0.15}}

\put(0.5,0.5){{$0$}}
\put(2,0.5){{$1$}}
\put(3,0.5){{$2$}}
\put(4,0.5){{$3$}}
\put(0.5,2){{$1$}}
\put(0.5,3){{$2$}}
\put(0.5,4){{$3$}}
\put(5,0.5){{$X$}}
\put(0.5,5){{$Y$}}

\put(4,1){\circle*{.2}}
\put(1,4){\circle*{.2}}
\put(1,2){\circle*{.2}}
\put(3,1){\circle*{.2}}

\thicklines\put(1,4){\line(1,-1){3}}
\thicklines\put(1,2){\line(2,-1){2}}
\thicklines\put(1,2){\line(0,1){2}}
\thicklines\put(3,1){\line(1,0){1}}
\end{picture}
\end{minipage}
\caption{Newton polytopes of $f$ and $f \mod 2$}
\label{fefmodp}
\end{figure}
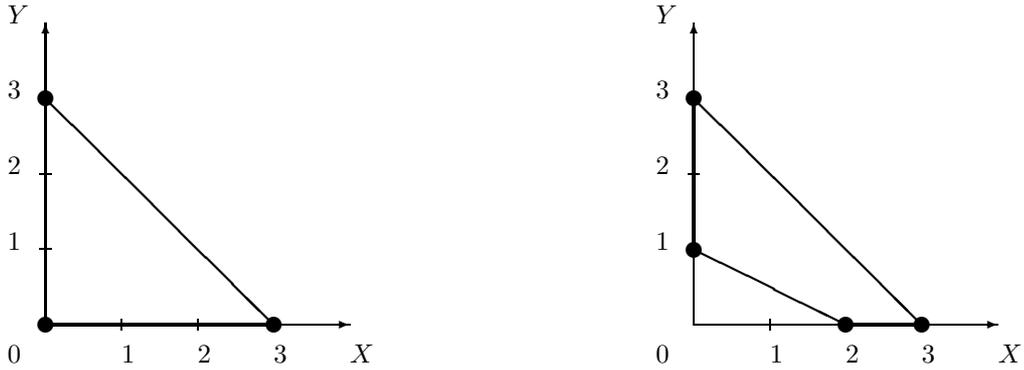
\end{center}

\noindent Therefore, thanks to  Proposition \ref{test} and  Proposition \ref{prop_mod_p}, absolute irreducibility can be tested with a Las Vegas strategy (i.e. the output of the algorithm is  \emph{always} correct). However the output can be ``I don't know''. 
More precisely:\\
For each $p \in L$, test the absolute irreducibility of $\overline{f} \in\FF_p[X,Y]$ with Proposition \ref{test}, and conclude with Proposition \ref{prop_mod_p}.

\medskip

\textbf{\textsc{Newton-polytop-mod algorithm}}\\
\textsc{Inputs:} $f(X,Y) \in \ZZ[X,Y]$, irreducible in $\QQ[X,Y]$.\\
\textsc{Outputs:} ``$f$ is absolutely irreducible'' or ``I don't know''.

\begin{enumerate}
\item Compute $P_f$ and the list $L$  of the primes dividing a coefficient cor\-re\-spon\-ding to a vertex of $P_f$.$\;$
Initialize $\;$ test:=false: $i:=1$:
\item While(test=false) and ($i \leq |L|$) do 
$p:=L[i]$;\\
If $tdeg( f \mod p)= tdeg(f)$ then\\
\hspace*{1cm}Compute $P_{f \mod p}$.\\
\hspace*{1cm}If $f \mod p$ satisfies condition $(C)$ then\\
\hspace*{1.5cm}If $f \mod p$ is irreducible in $\FF_p[X,Y]$ then test:=true; \mbox{End If};\\
\hspace*{1cm}End If; $\;$
End If;$\;$
$i:=i+1$ $\;$
End While:
\item If (test = true) then return ``$f$ is absolutely irreducible''  else return ``I don't know''  End If:
\end{enumerate}

The following table shows  that this algorithm is quite efficient. We constructed $1000$ polynomials in $\ZZ[X,Y]$ of total degree $n$, with random integer coefficients in $[-10^{12};10^{12}]$. All these polynomials are dense. For each polynomial we test its absolute irreducibility with the previous algorithm. $Success$ is the number of absolute irreducible polynomials detected with this algorithm. $T_{av}$ (respectively $T_{max}$, $T_{min}$) is the average (respectively maximum,  minimum) timing in second to perform one test.

\begin{center}\label{tableau_test_polytop_mod}
\begin{tabular}{|c|c|c|c|c|c|c|} 
\hline
$n$ & $Success$ & $T_{av}$ & $T_{max}$ & $T_{min}$ \\
\hline
\hline
10 & 1000 & 0.0041 & 0.33 & 0\\
\hline
30 & 1000 & 0.0113 & 0.56 & 0\\
\hline
50 & 1000 & 0.0252 & 0.59 & 0.009\\
\hline
100 & 1000 & 0.1552 & 0.66 & 0.081\\
\hline
200  & 1000 & 1.7579 & 3.22 & 0.701\\
\hline
\end{tabular}\\
\end{center}

\subsection{Modular computations and change of coordinates}\label{test_irred_mod_p_chag_var}

A last task is to  deal with polynomials whose coefficients are $0$, $1$ or $-1$
like $f(X,Y)=X^n+Y^n+1$, because in that case the Newton polytope gives no information, even when one looks at the modular reduction $f \mod p$. The natural strategy is to perform  a linear change of coordinates in order to obtain, after reduction, a polynomial satisfying  condition $(C)$. This is applied in the next algorithm.

Modular computation is performed in $\FF_p$ where $p$ is a prime between 2 and some value, here fixed to 101. 

\medskip

 \textbf{\textsc{Newton-Polytop-mod-chg-var algorithm}}\\
\textsc{Input:} $f(X,Y) \in \ZZ[X,Y]$,  irreducible in $\QQ[X,Y]$.\\
\textsc{Output:} ``$f$ is absolutely irreducible'' or ``I don't know''.

 For each $p$ prime between 2 and 101 do:\\
\hspace*{.5cm} For $(a,b) \in \FF_p^2$ do\\
\hspace*{1cm} $f_{a,b}(X,Y)=f(X+a,Y+b) \mod p$;\\
\hspace*{1cm} If $\tdeg(f_{a,b})=\tdeg(f)$ then\\
\hspace*{1.5cm} If $f_{a,b}$ satisfies condition $(C)$ then\\
\hspace*{2cm} If $f_{a,b}$ is irreducible in $\FF_p[X,Y]$ then return 

``$f$ is absolutely irreducible'';

\hspace*{1.5cm} End If; End If; End If;End If; End For; End For;

\noindent
Return ``I don't know''.

\medskip

This algorithm generalizes a test given by  \citet{Ragot2} based on the following classical property. 

{\bf Fact:} \emph{Let $f(X,Y) \in \KK[X,Y]$ be an irreducible polynomial in $\KK[X,Y]$. If there exists $(a,b) \in \KK^2$ such that $f(a,b)=0$ and $\dfrac{\partial f}{\partial X}(a,b) \neq 0$ or $\dfrac{\partial f}{\partial Y}(a,b) \neq 0$, then $f$ is absolutely irreducible.}

Ragot's algorithm tests if $f \mod p$ has a simple root in $\FF_p$. Remark that $f$ has a simple root if and only if  after a linear change of coordinates, which brings this root at the origin, the Newton polytope of $f$ has at least one of the points $(1,0)$ and $(0,1)$ as vertex, while $(0,0)$ is not a vertex.

\noindent In that case, condition $(C)$ is  satisfied; thus  Ragot's test is weaker than our test. \\
At http://www.mip.ups-tlse.fr/$\sim$cheze/, we listed an example of polynomial for which  absolute irreducibility is immediately detected by our algorithm reducing modulo  $p=2$, while  Ragot's test needs to reduce and check iteratively for all primes until   $p=73$.

\medskip

Let us remark that thanks to the following proposition, for $p \geq (n-1)^4$  our probabilistic test becomes deterministic.
\begin{prop}[\citet{Ragot}, Prop. 4.4.3 page 79]
Let $f(X,Y) \in \FF_p[X,Y]$ be an absolute irreducible polynomial of total degree $n$. If $p \geq (n-1)^4$ then $f$ has simple roots in $\FF_p$. \qed
\end{prop}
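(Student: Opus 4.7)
My plan is to derive the result from the Weil bound for absolutely irreducible curves over finite fields, after accounting for the losses coming from points at infinity and from singular points. First I would pass to the projective closure $\widetilde C \subset \PP^2_{\FF_p}$ of the affine curve $C : f(X,Y)=0$ and then to its normalization $C^{\nu}$. Since $f$ is absolutely irreducible of total degree $n$, $C^{\nu}$ is a smooth projective absolutely irreducible curve of genus
\[
g \leq \frac{(n-1)(n-2)}{2}.
\]
Weil's theorem then supplies
\[
\bigl|\, \#C^{\nu}(\FF_p) - (p+1)\,\bigr| \;\leq\; 2g\sqrt{p} \;\leq\; (n-1)(n-2)\sqrt{p}.
\]

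Next I would relate $\#C^{\nu}(\FF_p)$ to the set of simple affine $\FF_p$-points of $f$. The discrepancy comes from two sources: at most $n$ points at infinity of $\widetilde C$, and the singular points of the plane model $\widetilde C$, each of which is blown up into finitely many points on $C^{\nu}$. Using the standard relation between arithmetic and geometric genus (or, alternatively, Bezout applied to the ideal $(f,\partial f/\partial X,\partial f/\partial Y)$), the total number of singular points and their contributions on the normalization are bounded by a polynomial $c(n)$ of degree at most two in $n$. Subtracting, the number of simple affine $\FF_p$-points of $f$ is at least
\[
p + 1 - (n-1)(n-2)\sqrt{p} - c(n).
\]

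The final step is to check that this quantity is strictly positive whenever $p \geq (n-1)^4$. For such $p$ one has $\sqrt{p} \geq (n-1)^2 \geq (n-1)(n-2)$, whence $(n-1)(n-2)\sqrt{p} \leq p$, and the quadratic correction $c(n)$ is absorbed by the ``$+1$'' together with a small slack that can be squeezed out of the inequality $(n-1)^2 \geq (n-1)(n-2)$ when $n \geq 2$. The small cases ($n = 1, 2$, and a handful of initial $n$) would be handled separately by inspection, since there absolute irreducibility forces $f$ to be linear or conic, where the claim is trivial.

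The main obstacle I anticipate is arithmetic rather than conceptual: Weil's bound in its raw form gives a bound that is roughly $(n-1)^2\sqrt{p}$, which only yields $p \gtrsim (n-1)^4$ up to a multiplicative constant. To reach the clean threshold $(n-1)^4$ one must either invoke a sharpened version of the bound (Serre's refinement, or the Aubry--Perret / St\"ohr--Voloch estimates on singular plane curves that avoid the detour through the normalization), or argue directly on the singular affine model to absorb the $c(n)$ term into the slack. Ragot's argument presumably follows one of these routes; in a plan I would adopt Aubry--Perret, which states directly that the number of smooth $\FF_p$-points of an absolutely irreducible plane curve of degree $n$ is at least $p + 1 - (n-1)(n-2)\sqrt{p} - \tfrac{1}{2}(n-1)(n-2)$, and then conclude by an elementary calculation.
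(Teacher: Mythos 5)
The paper does not actually supply a proof of this proposition: it is cited verbatim from Ragot's thesis and closed with a \verb|\qed| marker, so there is no in-paper argument to compare against. Your sketch is nevertheless the standard route to such ``effective Noether/Weil'' statements, and it is essentially the one Ragot (and, in closely related results, Kaltofen, Gao, Lecerf and others) follows: bound the genus of the normalization by $(n-1)(n-2)/2$, apply the Hasse--Weil bound to get $\#C^{\nu}(\FF_p)\geq p+1-(n-1)(n-2)\sqrt{p}$, and then discard the at most $n$ points over the line at infinity and the boundedly many points of $C^{\nu}$ lying over singularities. The arithmetic does close: for $p\geq (n-1)^4$ one has $\sqrt p\geq (n-1)^2$, hence $p-(n-1)(n-2)\sqrt p\geq p/(n-1)\geq (n-1)^3$, which for $n\geq 3$ comfortably absorbs the $O(n^2)$ correction from singular and infinite points, so a refinement such as Aubry--Perret is a convenience rather than a necessity here. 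Two minor points to tighten: first, whatever singular-curve point count you invoke, you must be explicit that you are subtracting the (at most $(n-1)(n-2)/2$) singular $\FF_p$-points and the (at most $n$) points at infinity rather than relying on the statement to already exclude them; second, the ``small cases'' are exactly $n=1,2$ (where $(n-1)^4\leq 1$, and an absolutely irreducible plane curve of degree $\leq 2$ is automatically smooth, so the claim is immediate) --- for $n\geq 3$ no separate inspection is needed.
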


 Indeed, if we have a simple root then after a change of coordinates we get a polynomial satisfying Ragot's test and thus satisfying condition $(C)$. 
However, in practice, a probabilistic approach with a small prime is much faster.

\medskip

We only considered the case of integer polynomials, however our tests can be extended to the case of polynomials with coefficients in a commutative ring. In this case, the computation modulo a prime number will be replaced by a computation modulo a prime ideal.
The algorithms can also be extended to the case of polynomials with $N$ variables, in which case the probability of success will increase with $N$. Indeed, there are more chances to obtain a $\gcd$ equal to 1 with more coordinates.

\section{A toolbox for an absolute factorization algorithm}\label{terzasez}

We aim to build a factorization algorithm by extending the analysis and strategy developed for the previous irreducibility test. We keep the notations introduced in Section \ref{secone} and specially in Lemma \ref{lem_fonda}.
A main  task is to describe an algebraic extension $\LL=\QQ(\alpha)$ of $\QQ$ which contains the coefficients of a factor $f_1$ of $f$. 

This kind of strategy was already developed in the TKTD algorithm; TKTD is an acronym for Trager/ Kaltofen/Traverso/Dvornicich, (see \citet{TrDv}, \citet{K1} and  \citet{Trager2}). The result of the TKTD algorithm is an algebraic extension $\LL$ in which $f(X,Y)$ factors. Usually this extension is too big, that is to say: the degree extension of $\LL$ is not minimal.

We aim to reach the same goal, obtain an algebraic extension in which $f(X,Y)$ is reducible,  but the extension we will find is smaller, in fact minimal, and so more suitable for the computation of the factorization.


\subsection{Algebraic extensions and primitive elements}

We can describe the extension $\LL$  of $\QQ$ with a primitive element. Let us see that, generically, $\LL=\QQ[f_1(x_0,y_0)]$.

\begin{lem}\label{cst_primitif}
Let $f(X,Y)  \in \ZZ[X,Y]$ be a rationally irreducible polynomial (i.e. over $\QQ$) of degree $n$. Let $f_1(X,Y)$ be an absolute irreducible factor of $f(X,Y)$, $\deg f_1(X,Y)=m$.\\
For almost all $(x_0,y_0) \in \ZZ^2$ we have $\LL=\QQ(f_1(x_0,y_0))$.\\
More precisely, the following estimate on the probability holds:
$$\mathcal{P}\Big( \{ (x_0,y_0) \in S^2 \mid \LL= \QQ(f_1(x_0,y_0)) \} \Big) \geq 1-\dfrac {n(s-1)}{2\vert S \vert} \quad \text{ with } s:=n/m,$$
where $S$ is a finite subset of $\ZZ$.
\end{lem}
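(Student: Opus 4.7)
The plan is to reformulate the primitivity condition $\LL = \QQ(f_1(x_0, y_0))$ as a distinctness condition on the Galois conjugates of $f_1(x_0, y_0)$, and then to bound the number of bad pairs $(x_0, y_0)$ via Schwartz--Zippel applied to the differences of the absolute factors.

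By Lemma \ref{lem_fonda}, each absolute factor can be written as $f_l(X, Y) = B(\alpha_l, X, Y)$, $l = 1, \ldots, s$, where $B(Z, X, Y) = Y^m + \sum_{k=0}^{m-1}\sum_{i+j=k} b_{i,j}(Z) X^i Y^j$ and $\alpha_1 = \alpha, \ldots, \alpha_s$ are the Galois conjugates of $\alpha$ over $\QQ$. Fix $(x_0, y_0) \in \ZZ^2$ and set $\beta_l := f_l(x_0, y_0)$. Then $\beta_l = \sigma_l(\beta_1)$, where $\sigma_l \colon \LL \to \overline{\QQ}$ is the $\QQ$-embedding sending $\alpha \mapsto \alpha_l$, and the $\sigma_l$ exhaust $\mathrm{Hom}_{\QQ}(\LL, \overline{\QQ})$. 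Consequently, the distinct Galois conjugates of $\beta_1$ over $\QQ$ are exactly the distinct values appearing in $\{\beta_1, \ldots, \beta_s\}$, so the minimal polynomial of $\beta_1$ over $\QQ$ has degree equal to $|\{\beta_1, \ldots, \beta_s\}|$. Since $\QQ(\beta_1) \subseteq \LL$ and $[\LL:\QQ] = s$, we have $\LL = \QQ(\beta_1)$ if and only if the $\beta_l$ are pairwise distinct.

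For $i \neq j$ set $D_{ij}(X, Y) := f_i(X, Y) - f_j(X, Y)$. Since $f$ is squarefree (being $\QQ$-irreducible), the $f_l$ are pairwise distinct, so $D_{ij} \not\equiv 0$. Both $f_i$ and $f_j$ are monic in $Y$ of degree $m$ with $Y^m$ as their only degree-$m$ monomial, so that term cancels and $\tdeg D_{ij} \leq m-1$. The condition $\beta_i \neq \beta_j$ is exactly $D_{ij}(x_0, y_0) \neq 0$, and by the Schwartz--Zippel lemma $D_{ij}$ has at most $(m-1)|S|$ zeros in $S \times S$. Union-bounding over the $\binom{s}{2}$ unordered pairs, the number of bad $(x_0, y_0) \in S^2$ is at most $(m-1)|S| \cdot s(s-1)/2$; dividing by $|S|^2$ and using $(m-1)s \leq ms = n$ yields the failure-probability bound $n(s-1)/(2|S|)$. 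The main step is the primitivity reduction of the previous paragraph; once that is in hand, the degree estimate on $D_{ij}$ and the Schwartz--Zippel union bound are routine.
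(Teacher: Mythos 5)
Your proof is correct and takes essentially the same route as the paper: both reduce the primitivity of $f_1(x_0,y_0)$ to pairwise distinctness of the conjugate values $f_l(x_0,y_0)$ and then invoke Schwartz--Zippel on the differences $f_i-f_j$. The only cosmetic difference is that the paper applies Schwartz--Zippel once to the single product $D=\prod_{u\neq v}\bigl(f_u-f_v\bigr)$ with the cruder per-factor degree bound $m$, whereas you union-bound over the individual $D_{ij}$ using the slightly tighter bound $m-1$; the two computations yield the same final estimate.
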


\begin{proof}
We denote by  $a_{i,j}$ the coefficients of $f_1$, so  $\LL= \QQ(a_{i,j})$. Let $\sigma_l$, $(1 \leq l \leq s)$ be  $s$ independent $\QQ$-homomorphisms from $\LL$ to $\CC$. 

Hence we have: 
\begin{equation*}\tag{$\ast$}
\forall u \neq v, \text{ there exists }(i,j)\text{ such that }\sigma_u(a_{i,j}) \neq \sigma_{v}(a_{i,j}).
\end{equation*}
We consider $D(X,Y)=\prod_{u\neq v}\Big( \sum_{i,j} \big(\sigma_u-\sigma_v\big)(a_{i,j}) X^iY^j \Big)$.

Property $(\ast)$ implies that $D(X,Y) \neq 0$. Then there exists $(x_0,y_0) \in \ZZ^2$ such that $D(x_0,y_0) \neq 0$. This means: for all $u \neq v$, $\sigma_u \big(  f_1(x_0,y_0)\big) \neq \sigma_v \big(f_1(x_0,y_0)\big)$. Thus $f_1(x_0,y_0)$ is a primitive element of $\LL$ and this gives the desired result.

The probability statement is a direct consequence of Zippel-Schwartz's lemma, applied to $D(X,Y)$, whose degree is bounded by $(ms(s-1))/2=(n(s-1))/2$.
\end{proof}

Remark that the polynomial $D(X,Y)$ appearing in the previous proof is also the di\-scri\-mi\-nant, with respect to $Z$, of the 3-variate polynomial  $F(X,Y,Z)=\prod_{j} (Z-f_j(X,Y))$.
$F$ has coefficients in $\ZZ$ because its coefficients are invariant when we permute the $f_j$.

\subsection{Number fields and p-adic numbers}
\begin{lem}\label{rootinQp}
Let $M(T) \in \ZZ[T]$ be a polynomial and $p$ a prime number such that $p$ divides $M(0)$ and $p>\deg(M)$.\\
Then there exists a root in $\QQ_p$ of $M(T)$, considered as a polynomial in $\QQ_p[T]$.
\end{lem}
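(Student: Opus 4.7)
The plan is to apply Hensel's lemma to the evident mod-$p$ root. Reducing $M$ modulo $p$ gives $\bar M \in \FF_p[T]$, and since $p \mid M(0)$ we have $\bar M(0) = 0$, so $T$ divides $\bar M$. Write $\bar M = T^k \cdot \bar g$ with $\bar g(0) \neq 0$ and $k \geq 1$. Because $T^k$ and $\bar g$ are coprime in $\FF_p[T]$, Hensel's lemma lifts this coprime factorization to $M = G \cdot H$ in $\ZZ_p[T]$ with $\deg G = k$ and every non-leading coefficient of $G$ lying in $p\ZZ_p$.

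If $k = 1$, we are essentially done: $G$ is linear, its root lies in $p\ZZ_p \subset \QQ_p$, and that root is automatically a root of $M$. For $k \geq 2$ the factor $G$ has Eisenstein-like shape, and one must still produce a $\QQ_p$-rational root of $G$. The hypothesis $p > \deg(M) \geq k$ enters precisely at this stage: it forces any local extension generated by a root of $G$ to be tamely ramified (the ramification index is bounded by $k < p$), and combined with a Newton-polygon inspection of $G$, one expects to isolate a segment of integer slope and horizontal length one, yielding a root in $\QQ_p$.

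The main obstacle is exactly this last step. After the Hensel lifting one still has to extract a linear factor from $G$, and it is not obvious how the bare inequality $p > \deg M$ achieves this; in the extreme case $G$ could look Eisenstein and be irreducible of degree $\geq 2$ over $\QQ_p$. I would pursue the Newton-polygon/tame-ramification route as the most promising way to exploit the size hypothesis on $p$; if that did not close the gap, I would reread the statement for an implicit assumption (e.g.\ a squarefreeness or discriminant condition on $M$ modulo $p$, which is natural in the modular-factorization context of the paper) that directly ensures $k = 1$ and reduces the whole argument to the first paragraph.
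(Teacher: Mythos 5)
Your diagnosis is correct, and the obstacle you hit at $k\geq 2$ is not a shortcoming of your approach: the lemma as stated is false. Take $M(T)=T^2-3$ and $p=3$. Then $p\mid M(0)=-3$ and $p=3>2=\deg M$, yet $M$ is Eisenstein at $3$ and hence irreducible over $\QQ_3$, so it has no root there. In your notation this is exactly the bad case $k=2$ with $G=M$; the Newton polygon of $G$ is a single segment of slope $-1/2$, so every root has valuation $1/2\notin\ZZ$, and the Newton-polygon route you propose correctly shows that no $\QQ_p$-rational root can be isolated.

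The paper's own proof uses a different device from your coprime-factorization lift: it passes to the squarefree part $M_1=M/\gcd(M,M')$ and asserts $M_1(0)\equiv 0$ and $M_1'(0)\not\equiv 0\pmod p$, then Hensel-lifts the simple root $0$ of $M_1$. This breaks on the same example. Reading $M_1$ over $\QQ$ gives $M_1=M$ (already squarefree) and $M_1'(0)=0$ in $\FF_3$; reading $M_1$ as the squarefree part of $\bar M=T^2$ in $\FF_3[T]$ gives $\bar M_1=T$, whose simple root $0$ does not lift to a root of $M$ since $M$ has no factor in $\ZZ_3[T]$ reducing to $T$. Either way the argument fails exactly where yours does. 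Your closing guess names the right repair: one must additionally assume that $0$ is a simple root of $M\bmod p$ (for instance $p\nmid M'(0)$, or $p\nmid\mathrm{disc}(M)$, the ``good reduction'' condition of Proposition~\ref{noether}); then the hypothesis $p>\deg M$ plays no role and your first paragraph alone is a complete proof. In the algorithm only such good primes are actually used, so the lemma should be read with that implicit restriction.
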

 
This lemma allows us to consider a number field $\QQ(\alpha)$ as a subfield of $\QQ_p$, for a well-chosen prime $p$. Indeed, if $q(T)$ is the minimal polynomial of $\alpha$, then with a big enough integer $c$ we can find a prime number $p$ such that the polynomial $q(T+c)$ satisfies the hypothesis of Lemma \ref{rootinQp}. Thus we can consider $\alpha+c$ in $\QQ_p$, then $\QQ(\alpha)\subset \QQ_p$. During our algorithm we are going to factorize $f(X,Y) \mod p$. We can consider this factorization as an ``approximate'' factorization of $f$ in $\QQ(\alpha)$ with the $p$-adic norm. Then this factorization gives information about the absolute factorization.

\begin{proof}
Since $M(0)=0 \mod p$,  $0$ is also a root of $M_1(T)=\frac{M(T)}{\gcd(M(T),M'(T))}$ in $\FF_p$. As $p> \deg(M)$ we have $M_1'(0)\neq 0$ in $\FF_p$ and we can lift this root in $\QQ_p$ by Hensel's liftings. This gives a root of $M_1(T)$ in $\QQ_p$, thus a root of $M(T)$ in $\QQ_p$.
\end{proof}


\subsection{Choice of $p$}
\begin{lem}\label{evite_B}
Let $f(X,Y) \in \ZZ[X,Y]$, $\deg f(X,Y)\geq 1$ and let  $\mathcal B$ be a positive integer. There exist $(x_0,y_0) \in \ZZ^2$ and $p \in \ZZ$ such that $p$ divides $f(x_0,y_0)$ and $p$ does not divide $\mathcal  B$.
\end{lem}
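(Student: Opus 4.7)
The plan is to reduce the bivariate lemma to a one-variable statement and then invoke Schur's classical result that a non-constant polynomial in $\ZZ[X]$ has infinitely many prime divisors among its values.

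First I would specialise one variable. Since $\deg f \geq 1$, at least one of $\deg_X f$ or $\deg_Y f$ is positive; up to swapping $X$ and $Y$, I may assume $\deg_X f \geq 1$. The leading coefficient $c(Y) \in \ZZ[Y]$ of $f$ viewed as a polynomial in $X$ is nonzero, so for all but finitely many $y_0 \in \ZZ$ one has $c(y_0) \neq 0$, and then $g(X) := f(X,y_0) \in \ZZ[X]$ has degree $\deg_X f \geq 1$.

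Next I would prove that the set $\Pi$ of primes dividing some nonzero value $g(x_0)$, $x_0 \in \ZZ$, is infinite. After replacing $g(X)$ by $g(X+c)$ for a suitable integer $c$, which leaves $\Pi$ unchanged, I may assume $g(0) = a \neq 0$. Suppose for contradiction that $\Pi = \{p_1, \ldots, p_l\}$ is finite, and set $P = p_1 \cdots p_l$. A direct computation then gives
\[
g(aPt) \;=\; a\bigl(1 + Pt \cdot S(t)\bigr), \qquad S(t) \in \ZZ[t],
\]
with $S \neq 0$ because $\deg g \geq 1$. For $|t|$ large, the factor $1 + Pt \cdot S(t)$ has absolute value strictly greater than $1$, hence admits a prime divisor $q$; reducing modulo each $p_i$ shows $q \notin \{p_1,\ldots,p_l\}$, yet $q \in \Pi$, a contradiction.

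Finally, since $\mathcal B$ has only finitely many prime divisors while $\Pi$ is infinite, I can pick $p \in \Pi$ not dividing $\mathcal B$, and then an integer $x_0$ with $p \mid g(x_0) = f(x_0, y_0)$; the pair $(x_0, y_0)$ together with this $p$ is the required witness. The only genuine obstacle is the Schur-type infinitude of $\Pi$; the specialisation of one variable and the substitution trick are routine bookkeeping.
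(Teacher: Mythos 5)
Your proof is correct and takes essentially the same approach as the paper: both reduce to a one-variable specialization of $f$ and then apply the Euclid/Dirichlet substitution trick to a shifted polynomial with nonzero constant term. The only cosmetic difference is that you first establish the infinitude of prime divisors of the values (Schur's theorem) and then select one coprime to $\mathcal{B}$, whereas the paper folds $\mathcal{B}$ directly into the substitution $X\mapsto c\mathcal{B}X$; the underlying argument is identical.
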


\begin{proof}
We can reduce to the case of one variable and use the classical argument of  Dirichlet for proving that the set of prime numbers is infinite.

Consider the polynomial $f(X) \in \ZZ[X]$, $\deg f\geq 1$. Consider $x_1$ such that the constant term $c:=f(x_1)$ is not zero. 

Set $\tilde{f}(X)=f(X-x_1)$, so $c$ is the constant term of $\tilde f(X)$. Consider $\tilde{f}(c\mathcal BX)=c(1+\mathcal BXq(X))$, where $q(X)\in \ZZ[X]$ is not zero (otherwise $deg f<1$). We can find $x_0 \in \ZZ$, $x_0\neq 0$ such that $\mathcal Bx_0q(x_0)\neq 0$. Then, a prime $p$ dividing $1+\mathcal Bx_0q(x_0)$ does not divide $\mathcal B$ and we are done.
\end{proof}

\begin{defn}
 We say that the prime integer $p$ gives a {\em bad reduction} of $f(X,Y)$ if the number of absolute factors of $f(X,Y)\mod p$ differs from the number of absolute factors of $f(X,Y)$. \qed
\end{defn}

\begin{prop}\label{noether}
 Let $f(X,Y)$ be a rationally irreducible polynomial, monic in $Y$. Then there is a finite number of prime integers $p$ giving a bad reduction of $f(X,Y)$.

Furthermore, if $d(X)=disc_Y(f(X,Y))$, $d_1(X)=\text{ square-free part of }d(X)$ and $D=disc_X(d_1(X))$, the set of prime integers $p$ giving a bad reduction of $f$ is contained in the set of prime divisors of $D$. 
\end{prop}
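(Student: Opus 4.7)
The plan is to exploit the absolute factorization $f = f_1 \cdots f_s$ with $f_i \in \LL[X,Y]$ and $\LL = \QQ(\alpha)$ given by Lemma~\ref{lem_fonda}, and to show that for any prime $p$ not dividing $D$ the factorization descends intact to characteristic $p$, so $p$ is of good reduction. I will first handle the finiteness statement, then identify the bad primes with divisors of $D$.

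For finiteness, let $N$ be a positive integer clearing the denominators of all coefficients of every $f_i$, and let $P(T) \in \ZZ[T]$ be the minimal polynomial of $\alpha$. For any prime $p$ not dividing $N \cdot \text{disc}(P)$, the prime $p$ is unramified in the ring of integers $\mathcal{O}_\LL$, and reducing the identity $f = f_1 \cdots f_s$ modulo any prime $\mathfrak{p}$ of $\mathcal{O}_\LL$ above $p$ gives $\bar f = \bar f_1 \cdots \bar f_s$ in $\overline{\FF_p}[X,Y]$. Denoting by $s_p$ the number of absolute factors of $\bar f$, this already yields $s_p \geq s$ and shows that at most finitely many primes can fail the reverse inequality.

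The link with $D$ arises from the standard discriminant identity
\[
d(X) = \prod_{i=1}^s \text{disc}_Y(f_i) \cdot \prod_{1 \leq i < j \leq s} \text{Res}_Y(f_i, f_j)^2,
\]
which exhibits the distinct roots of $d_1(X)$ in $\overline{\QQ}$ as exactly the $X$-coordinates of the critical values of the projection $\pi \colon V(f) \to \AA^1_X$: either branch points of an individual component $V(f_i)$ or meeting points between two distinct components. Under the assumption $p \nmid D$, the reduction $\bar d_1$ remains separable in $\FF_p[X]$, so these critical values remain pairwise distinct of simple multiplicity modulo $p$, and the branch structure of $\pi$ is combinatorially preserved.

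The hard step, and the main obstacle, is proving $s_p \leq s$ when $p \nmid D$. I argue by contradiction: suppose $\bar f_1 = g \cdot h$ nontrivially with $g, h$ coprime in $\overline{\FF_p}[X, Y]$. Choose $x_0 \in \overline{\FF_p}$ with $\bar d(x_0) \neq 0$; then $\bar f_1(x_0, Y)$ has $m$ distinct roots, so $g(x_0, Y)$ and $h(x_0, Y)$ are coprime in $\overline{\FF_p}[Y]$. Hensel's lemma lifts this factorization to $f_1(x_0, Y) = G_{x_0}(Y) \cdot H_{x_0}(Y)$ over the $\mathfrak{p}$-adic completion $\widehat{\LL}_{\mathfrak{p}}$. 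The combinatorial stability of the branch locus guaranteed by $p \nmid D$ forces the partition of the $Y$-roots of $f_1(x_0, Y)$ produced by this lift to be invariant under the monodromy of $\pi$ over $\AA^1_X \setminus V(d_1)$, hence globally consistent as $x_0$ varies. This yields a genuine polynomial factorization of $f_1$ over $\overline{\QQ}[X,Y]$, contradicting the absolute irreducibility of $f_1$; so no such $g, h$ can exist and $s_p = s$. The delicate point is the monodromy specialization, which is precisely enabled by the square-free hypothesis on $\bar d_1$, equivalently by $p \nmid D$.
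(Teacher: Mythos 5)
There are two genuine gaps in your argument, and they occur precisely at the two points where the paper's own proof simply cites references (Noether for finiteness; Trager and Zannier for the characterization via $D$), so it is worth being explicit about where the real content lies.

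For the finiteness claim, reducing the identity $f = f_1\cdots f_s$ modulo a prime $\mathfrak{p}$ above $p$ does not ``already yield $s_p \geq s$'': distinct factors $f_i, f_j$ can become equal or acquire a common factor after reduction, in which case the number of \emph{distinct} absolute factors drops below $s$. More seriously, even if one grants $s_p \geq s$ for almost all $p$, that inequality says nothing about finiteness of the set where equality fails; you simply assert ``shows that at most finitely many primes can fail the reverse inequality'' with no justification. Finiteness of bad primes is exactly the content of Noether's theorem and is not a formal consequence of the reduction of a single product identity.

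For the characterization via $D$, the entire weight of the argument rests on the sentence claiming that ``the combinatorial stability of the branch locus\ldots forces the partition\ldots to be invariant under the monodromy of $\pi$\ldots hence globally consistent as $x_0$ varies,'' and this is asserted, not proved. What you need is a comparison between the monodromy group of $f_1$ over $\overline{\QQ}(X)$ and that of $\bar f_1$ over $\overline{\FF_p}(X)$: one wants the latter to be a quotient of the former, so that a $G_p$-invariant partition of the roots pulls back to a $G_0$-invariant partition and hence a factorization over $\overline{\QQ}[X,Y]$. That the branch points of $\bar d_1$ stay distinct (which is what $p \nmid D$ gives, modulo also checking that $\deg \bar d_1 = \deg d_1$) is a necessary ingredient, but it is not by itself sufficient: one must control tameness and invoke a specialization theorem for the fundamental group of the punctured line. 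This is exactly the nontrivial part of the Trager/Zannier results the paper cites, and labelling it ``the delicate point'' without supplying the argument leaves the proof incomplete. You have also silently restricted to the case ``$\bar f_1$ factors further''; the case where distinct $\bar f_i, \bar f_j$ collide, which also produces bad reduction, is not addressed by the contradiction you set up.
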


\begin{proof}
The finiteness of the set of $p$ giving bad reductions comes from a theorem of \citet{Noe}.
For the characterization using $D$, we can say with other words that $f(X,Y)$ has a good reduction $\mathrm{mod}\, p$ if $d(X)$ and $d(X)\mod p$ have the same number of distinct roots. For the proof of this fact, see \citet{Trager}. Finally, for another proof, see \citet{Za}.
\end{proof}

\subsection{Recognition strategy}\label{approxp}

We assume that we chose a good prime $p$, such that $\tdeg(f)=\tdeg(f \mod p)$ and $f \mod p$ factors as $f(X,Y)=F^{(1)}(X,Y)\cdot G^{(1)}(X,Y) \mod p$ where $F^{(1)}$ is exactly the image $\mod p$ of an absolute factor $f_1$ of $f$.

In order to find the splitting field of $f(x_0,Y)$, relying on Proposition \ref{cst_primitif}, we need to compute $q(T)$, the minimal polynomial with integer coefficients of $\alpha:=f_1(x_0,y_0)$.

Starting from a factorization $f(x_0,Y)=F^{(1)}(x_0,Y)G^{(1)}(x_0,Y) \mod p$, we lift it through Hensel Lifting to the level of accuracy $p^\lambda$. We then consider the $p$-adic approximation $\overline \alpha:=F^{(\lambda)}(x_0,y_0)$ of $\alpha$. Using a ``big enough'' level of accuracy $\lambda$, we can compute the minimal polynomial of $\alpha$ from $\overline \alpha$.

\begin{prop}\label{LLLp}
 Consider $\overline \alpha=F^{(\lambda)}(x_0,y_0)$, $0\leq \overline \alpha\leq p^\lambda-1$ constructed above,  a positive integer $Q$ bounding the size of the coefficients of $q(T)$, $Q\geq \Vert q(T)\Vert_\infty$, and a positive  integer $\lambda\geq \log_p(2^{s^2/2}(s+1)^sQ^{2s})$.

Then we can compute the minimal polynomial $q(T)$ of $\alpha$ using the $LLL$ algorithm on an integer lattice whose basis is given using $\overline \alpha$ and $p^\lambda$.
\end{prop}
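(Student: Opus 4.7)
The plan is the classical LLL-based reconstruction of an algebraic number from a $p$-adic approximation, calibrated to the constants of the statement. First I set up the integer lattice
\[
L=\{(r_0,\ldots,r_s)\in\ZZ^{s+1}\mid r_0+r_1\overline\alpha+\cdots+r_s\overline\alpha^s\equiv 0\bmod p^\lambda\},
\]
a rank-$(s+1)$ sublattice of $\ZZ^{s+1}$ of determinant $p^\lambda$, whose elements parametrize exactly the integer polynomials $r(T)=\sum r_iT^i$ of degree at most $s$ that vanish at $\overline\alpha$ modulo $p^\lambda$. An explicit basis is given by the rows of the $(s+1)\times(s+1)$ matrix whose $i$-th row is $(-\overline\alpha^i,0,\ldots,0,1,0,\ldots,0)$ with the $1$ in position $i+1$, for $i=1,\ldots,s$, together with $(p^\lambda,0,\ldots,0)$. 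This is the only place where the input data $\overline\alpha$ and $p^\lambda$ enter the construction.

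Next I argue that the target $\vec q=(q_0,\ldots,q_s)$ lies in $L$. By construction $\overline\alpha\equiv\alpha\bmod p^\lambda$ in $\ZZ_p$, hence $q(\overline\alpha)\equiv q(\alpha)=0\bmod p^\lambda$; since $q(\overline\alpha)\in\ZZ$ this forces $p^\lambda\mid q(\overline\alpha)$, so $\vec q\in L$ with $\|\vec q\|_2\le\sqrt{s+1}\,Q$. Applying LLL to the basis above produces, in polynomial time, a nonzero $\tilde v\in L$ satisfying $\|\tilde v\|_2\le 2^{s/2}\lambda_1(L)\le 2^{s/2}\sqrt{s+1}\,Q$ by the standard first-vector bound for a lattice of rank $s+1$.

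The heart of the argument is to show that any such short $\tilde v=(r_0,\ldots,r_s)$, viewed as a polynomial $r(T)\in\ZZ[T]$, must be proportional to $q(T)$. Suppose for contradiction that $\gcd(r,q)=1$ in $\QQ[T]$. Then $\mathrm{Res}(r,q)$ is a nonzero integer, and Mignotte's resultant bound gives
\[
|\mathrm{Res}(r,q)|\le \|r\|_2^s\,\|q\|_2^s\le (2^{s/2}\sqrt{s+1}\,Q)^s(\sqrt{s+1}\,Q)^s = 2^{s^2/2}(s+1)^sQ^{2s}.
\]
On the other hand, the B\'ezout identity $\mathrm{Res}(r,q)=a(T)r(T)+b(T)q(T)$ with $a,b\in\ZZ[T]$, evaluated at $\overline\alpha$, yields $p^\lambda\mid\mathrm{Res}(r,q)$. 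Combined with the precision hypothesis $p^\lambda\ge 2^{s^2/2}(s+1)^sQ^{2s}$, this forces $\mathrm{Res}(r,q)=0$, contradicting coprimality. Hence $q\mid r$ in $\QQ[T]$, and since $\deg r\le s=\deg q$ we get $r=c\,q$ for some $c\in\QQ^*$; taking the primitive part of $\tilde v$ returns $\pm\vec q$, i.e.\ the coefficient vector of $q(T)$.

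The main obstacle is precisely the calibration in the third step: it works because Mignotte's inequality is stated in the $\ell_2$ norm that also governs the LLL output, so the constant $2^{s^2/2}(s+1)^s$ appears identically on both sides with no slack. Any other choice of norm would introduce extra $\sqrt{s+1}$ factors and require a larger $\lambda$; the delicate point of the proof is therefore to pair the LLL first-vector bound with a resultant estimate in the matching norm. The rest of the argument (lattice setup, checking $\vec q\in L$, invoking LLL, and concluding via the B\'ezout identity) is straightforward and turns entirely on the standard tools recalled above.
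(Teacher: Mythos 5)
Your proof is correct and follows essentially the same route as the paper: both set up the lattice of integer polynomials of degree at most $s$ vanishing at $\overline\alpha$ modulo $p^\lambda$, apply LLL to obtain a short vector, and conclude it must equal $q(T)$. The only difference is that the paper cites \citet[Lemma 16.20]{VZG} as a black box for the final step, whereas you unfold that lemma into its underlying Mignotte-resultant argument (which is precisely what that cited lemma contains), and you use a mildly different but equivalent explicit basis for the same lattice.
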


\begin{proof}
We apply the same construction of \citet[Section 16.4]{VZG} for detecting rational factors of univariate polynomials.

We consider the polynomials 
\[
\{T^i(T-\overline\alpha)|i=0,\dots,s-1\}\cup\{p^\lambda\}.
\]

We write as usual 
\[
T^i(T-\overline\alpha)=T^{i+1}-\overline\alpha T^i=\sum_{j=0}^s t_j T^j,
\]
where, in this case, $t_j\neq 0$ for $j\in\{i+1,i\}$ and $t_j=0$ otherwise. Then the associated vector for the polynomial $T^{i}(T-\overline\alpha)$ is
\[
b_i=(t_s,\dots,t_0).
\]

For the constant polynomial $p^\lambda$, we associate the vector $\tilde b=(0,\dots,0,p^\lambda)$.
We can construct the $(s+1)\times(s+1)$  matrix $B$ whose columns are the $b_i$, $i=0,\dots,s-1$ and $\tilde b$:

\begin{equation*}
B=
\begin{bmatrix}
1 & 0 & 0 & 0 & \dots & 0 & 0 & 0 \\
-\overline \alpha & 1 & 0 & 0 & \dots & 0 & 0 & 0 \\
0 & -\overline\alpha & 1 & 0 & \dots & 0 & 0 & 0 \\
0 & 0 & -\overline\alpha & 1 & \dots & 0 & 0 & 0 \\
\vdots &\vdots & \vdots & \vdots & \  & \vdots &\vdots &\vdots\\
0 & 0&0&0& \dots & 1 &0 & 0\\
 0 & 0&0&0& \dots & -\overline\alpha &1 & 0\\
 0 & 0&0&0& \dots & 0 &-\overline \alpha& p^\lambda
\end{bmatrix} 
\end{equation*}

If we consider a point $g$ of the integer lattice $\bigwedge(B)\subseteq \RR^{s+1}$ generated by the columns of the  matrix $B$, we can write its components with respect to the standard basis of $\RR^{s+1}$
\[
g=\sum_{i=0}^{s-1}g_ib_i+ \tilde g \tilde b=(g_{s-1}, g_{s-2}-\overline \alpha g_{s-1},\dots, g_{0}-\overline \alpha g_{1}, \tilde g p^\lambda-\overline \alpha g_0)
\]
and  associate a polynomial:
\[
G(T)=g_{s-1}T^s+(g_{s-2}-\overline \alpha g_{s-1})T^{s-1}+\dots+ (g_{0}-\overline \alpha g_{1})T+ \tilde g p^\lambda-\overline \alpha g_0=
\]
\[
=S(T)(T-\overline\alpha)+\tilde g p^\lambda \quad \text{ with } \quad S(T)=\sum_{i=0}^{s-1}g_iT^i.
\]
So if $g\in \bigwedge(B)$, the associated polynomial $G(T)$ has degree $\leq s$ and it is divisible by $(T-\overline\alpha)$ modulo $p^\gamma$.

The {\it vice versa} holds:\\
If $G(T)$ is a polynomial of degree at most $s$ and $G(T) \mod p^\lambda$ is divisible by $(T-\overline\alpha)$, then we can write
\[
 G(T)=S^\ast(T)(T-\overline \alpha)+R^\ast(T)p^\gamma \quad\text { with } \deg S^\ast(T)\leq s-1 \text{ and } \deg R^\ast(T)\leq s.
\]
 Using Euclidean division, we obtain $R^\ast(T)=S^{\ast \ast}(T)(T-\overline\alpha)+R p^\gamma$ with $\deg S^{\ast\ast}\leq s-1$ and $R$ a costant.
We define $S(T):=S^\ast(T)+p^\gamma S^{\ast\ast}(T)$. We then have that
\[
 G(T)=S(T)(T-\overline \alpha)+Rp^\gamma,
\]
that is, $G(T)$ can be written as a point of the lattice $\bigwedge(B)$.

So if we consider the matrix $B$ and we apply the LLL algorithm, we obtain as first vector of the reduced basis a ``short''vector representing a polynomial $G(T)$ with ``small'' norm such that $G(T)$ has degree $s$ and $G(T)\mod p^\lambda$ is divisible by $(T-\overline\alpha)$. Using the hypothesis $\lambda\geq \log_p(2^{s^2/2}(s+1)^sQ^{2s})$ we can apply  \citet[Lemma 16.20]{VZG}: we then have that $q(T)$ and $G(T)$ have a non-constant $\gcd$. But since $q(T)$ is irreducible and  $\deg q(T)=\deg G(T)$, we have that $q(T)=G(T)$.
\end{proof}


\medskip

To establish the level of accuracy $\lambda$, we need a bound on the size of the coefficients of the minimal polynomial of $\alpha$, $q(T)$.
Remember that
\[
q(T)=\prod_{i=1}^s(T-\alpha_i)=T^s+\sigma_1(\tilde\alpha)+\cdots +\sigma_{s-1}(\tilde\alpha)T+\sigma_s(\tilde\alpha),
\]
where $\sigma_i(\tilde\alpha)$ is the $i$-th symmetric function in the $\alpha=\alpha_1,\alpha_2,\dots,\alpha_s$.

Observe that
\[
 \vert \sigma_k(\tilde\alpha)\vert\leq \sum_{\tau\in \mathcal S_k}\vert\alpha_{\tau(1)}\vert\cdots \vert\alpha_{\tau(k)}\vert\leq \sum_{\tau   \in \mathcal{S}_k} \prod_{j=1}^m \vert y_j^{\tau(1)}\vert\cdots \prod_{j=1}^m \vert y_j^{\tau(k)}\vert,
\]
where $f_l(x_0,Y)=\prod_{j=1}^m(Y-y_j^{(l)})$ and $f(x_0,Y)=\prod_{i=1}^s f_l(x_0,Y)$.

As a bound on the coefficients of $f(x_0,Y)$ gives a bound on the $y_j^{(l)}$ \citep{VZG}, a bound on the coefficients of $f(x_0,Y)$ gives a bound for $\Vert q(T)\Vert_\infty$.

In practice, for ``early detection'', we rely on Proposition \ref{LLLp} replacing $Q$ by
 \[
 Q_1=\Vert f(x_0,Y)\Vert_\infty.
 \]

\begin{rem}\label{strat}
If $f(X,Y)$ is not monic, then we have to face two pro\-blems:
\begin{enumerate}
 \item \emph{Leading coefficient problem}: we cannot apply Hensel lifting in its ``classical'' form, because we need to have a factorization $f(x_0,Y)=F^{(1)}(x_0,Y)G^{(1)}(x_0,Y) \mod p$ in which $F^{(1)}(x_0,Y)$ or $G^{(1)}(x_0,Y)$ is monic.

\item In practical use of this construction of the minimal polynomial of $\alpha$, we will avoid  to lift the factorization until the level $\lambda$ of Proposition \ref{LLLp} (this bound is usually very pessimistic). However, in this way we are not sure that the polynomial $G(T)$ is actually $q(T)$. We then need a quick method to check if we found a good candidate to define the field extension or if we have to lift the factorization to a higher level of accuracy.
\end{enumerate}

\medskip

Consider $f(x_0,Y)=\sum_{i=0}^n \phi_iY^i$.

For what concerns the \emph{leading coefficient problem}, we can simply consider the ``mo\-di\-fied'' linear Hensel Lifting \citep[Algorithm 6.1]{GCL}. In this way we can lift the factorization modulo $p$,  but the coefficients involved in the computations are bigger,  since actually we lift a factorization of $\phi_n\cdot f(x_0,Y)$, obtaining a factor that we call $\tilde F^{(\lambda)}(Y)$.

\medskip

For what concerns the second problem,  we have to understand how the roots of a factor of $f(x_0,Y)$ are in connection with the coefficients of $q(T)$ and $\tilde f_1(Y)$, that is the factor of $f(x_0,Y)$ that we obtain after the ``modified'' Hensel Lifting. We call $q_s$ the leading coefficient of the polynomial $q(T)$.

If $f_1(x_0,Y)$ is the  factor of $f(x_0,Y)$ we are looking for, then the product of its roots is simply $\beta:=(-1)^{\deg \tilde f_1(Y)} \tilde f_1(y_0)/\phi_n$.

Then the product of the conjugated of $\beta$ is simply $q(0)/q_s$, but this is also the product of all the roots of $f(x_0,Y)$. So we have the following relation $\frac{q(0)}{q_s}=(-1)^{s}\frac{f(x_0,y_0)}{\phi_n}$. 

When we apply the $LLL$ algorithm to $\bigwedge(B)$ we can then proceed as follows: if the obtained polynomial $G(T)$ satisfies
\begin{equation*}\label{recogn}\tag{$\star$}
 \frac{G(0)}{G_s}=(-1)^s \frac{f(x_0,y_0)}{\phi_n} \qquad \text{with $G_s$ leading coefficient of }G(T)
\end{equation*}
then we will try to factor $f(x_0,Y)$ in the algebraic extension defined by  $G(T)$, that is $\QQ[T]/G(T)$. If $G(T)$ does not satisfy (\ref{recogn}), then we have to rise the level of approximation of the Hensel lifting and then apply again $LLL$ to the new lattice and test again. 

In this way we have a necessary condition that can help us to recognize the minimal polynomial of $\alpha$.

\end{rem}


\section{Absolute factorization algorithm}

We use the results and methods of the previous section to compute an absolute factor $f_1$ of $f$ (i.e. a representation of the field $\LL$ of its coefficient and the coefficients).
%
%
%
%

To ease the presentation, we rely on the practical evidence that for  random integer value $x_0$, $f(x_0,Y)$ is irreducible. In Section \ref{hilbert} we will present a variant using a weaker condition.


\medskip

\textbf{\textsc{Abs-Fac  algorithm}}\\
\textsc{Input:} $f(X,Y) \in \ZZ[X,Y]$, irreducible in $\QQ[X,Y]$ of degree $n$, a finite subset $S$ of $\ZZ^2$.\\
\textsc{Output:}  $q(T) \in \QQ[T]$ minimal polynomial of $\alpha$ defining the minimal algebraic extension $\LL=\QQ(\alpha)=\QQ[T]/q(T)$ and $f_1(X,Y) \in \LL[X,Y]$ an absolute irreducible factor of $f$, or ``I don't know''.\\
\textsc{Preprocessing:} Choose $(x_0,y_0) \in S^2$, such that $f(x_0,Y)$ is irreducible. If all of the points were used, then return ``I don't know''.

\begin{enumerate}

\item \label{stepchoice}Choose a prime $p$ dividing $f(x_0,y_0)$ such that $\tdeg(f \mod p)= \tdeg(f)$. 

\item\label{facto_dans_algo} Factorize $f$ in $\FF_p[X,Y]$.

If $f\mod p$ is irreducible and satisfies an absolute irreducibility test then Return ``$f$ is absolutely irreducible'', $f_1:=f$ and $q(T):=T$.

If $f \mod p$ is irreducible and not absolutely irreducible  then go to the Preprocessing step (choosing a point $(x_0,y_0)$ not yet used and a different prime $p$).

Else ${f}(X,Y)=F^{(1)}(X,Y)\cdot G^{(1)}(X,Y) \mod p$ where $F^{(1)}$ is one of  the irreducible factors in $\FF_p[X,Y]$ with smallest degree $m$, check that $s:=\frac{\tdeg (f)} {m}$ is an integer else go to the Preprocessing step (choosing a point $(x_0,y_0)$ not yet used and a different prime $p$). 

\item \label{stepalpha}Lift the factorization to ${f}(x_0,Y)=F^{(\lambda)}(x_0,Y)G^{(\lambda)}(x_0,Y) \mod p^\lambda$ ; $\lambda$ is chosen according to Proposition \ref{LLLp} and Remark \ref{strat}.

\item \label{steprecogn}Define $\overline\alpha:=F^{(\lambda)}(x_0,y_0) \in \ZZ/p^\lambda \ZZ$. Find, using the lattice described in Section \ref{approxp} and the LLL algorithm, the polynomial $q(T)$. If $q(T)$ does not satisfy (\ref{recogn}) or it is not irreducible, go back to step (\ref{stepalpha}) and double $\lambda$.
\item \label{factors} Denote by $\alpha$ a root of $q(T)$ (i.e. the command RootOf in Maple) then factorize $f(x_0,Y)$ in $\QQ(\alpha)[Y]=\LL[Y]$ and denote by $F_1(x_0,Y)$ a factor with degree $m$ and with $F_1(x_0,y_0)=\alpha$.\\
If we do not find such a factor, then go to the Preprocessing step (choosing a point $(x_0,y_0)$ not yet used and a different prime $p$). 

\item  \label{fin} Perform $m$ times $X$-adic Hensel liftings on $f(x_0,Y)=F_1(x_0,Y)F_2(x_0,Y)$ to determine a candidate for $f_1(X,Y)$ in $\LL[X,Y]$ and check that it divides $f(X,Y)$.
Else go to the Preprocessing step (choosing a point $(x_0,y_0)$ not yet used and a different prime $p$).

\medskip

Return $q(T)$ and $f_1(X,Y)$.
\end{enumerate}

\begin{prop}
The algorithm  gives a correct answer.
\end{prop}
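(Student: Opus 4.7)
The algorithm is of Las Vegas type, so correctness reduces to verifying that every branch returning a concrete pair $(q(T), f_1)$ returns a correct one; the explicit ``I don't know'' return and the numerous ``go to Preprocessing'' restarts are tautologically safe. There are exactly two positive terminations: the early exit inside Step \ref{facto_dans_algo} declaring $f$ absolutely irreducible, and the main return at the end of Step \ref{fin}. The early exit is straightforward: Step \ref{stepchoice} enforces $\tdeg(f \bmod p) = \tdeg(f)$, so when $f \bmod p$ is irreducible in $\FF_p[X,Y]$ and passes the test of Proposition \ref{test}, Proposition \ref{prop_mod_p} immediately gives that $f$ is absolutely irreducible, justifying the output $(T, f)$.

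For the main return, I would collect the invariants explicitly enforced by the intermediate checks: $q(T) \in \QQ[T]$ is irreducible (checked in Step \ref{steprecogn}); the field $\LL = \QQ[T]/q(T)$ has degree $s = n/m$ over $\QQ$; $f_1 \in \LL[X,Y]$ has total degree $m$; and $f_1$ divides $f$ in $\LL[X,Y]$ (the final divisibility test of Step \ref{fin}). Since $\LL \subseteq \overline{\QQ}$, this divisibility also holds in $\overline{\QQ}[X,Y]$. Writing the absolute factorization $f = g_1 \cdots g_r$ with $\deg g_i = n/r$ via Lemma \ref{lem_fonda}, unique factorization yields $f_1 = c \prod_{j \in J} g_j$ for some $J \subseteq \{1, \ldots, r\}$ and constant $c$, and a degree count forces $|J| = r/s$. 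A standard Galois-orbit argument then shows that the coefficient field of $f_1$ is contained in $\LL$ and has degree over $\QQ$ equal to the size of the Galois orbit of $J$, which is bounded by $s$; the task therefore reduces to proving $|J| = 1$, equivalently $s = r$.

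The main obstacle is precisely this last equality. I would argue it as follows. Under good reduction (Proposition \ref{noether}), which holds for all but finitely many primes, the $\FF_p$-factorization of $f \bmod p$ refines the reduction of the absolute factorization, so the smallest-degree factor $F^{(1)}$ has degree $m \leq n/r$, giving $s \geq r$; combined with $|J| = r/s \geq 1$ this forces $s = r$ and $|J| = 1$. For a prime $p$ giving bad reduction, or for any other misalignment between $s$ and the true number $r$ of absolute factors, at least one of the built-in sanity tests --- the recognition condition \eqref{recogn} from Remark \ref{strat}, the irreducibility of $q(T)$ checked in Step \ref{steprecogn}, the existence of a factor $F_1$ with $F_1(x_0, y_0) = \alpha$ in Step \ref{factors}, or the final divisibility check in Step \ref{fin} --- must fail, sending the algorithm back to preprocessing rather than producing an incorrect output. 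Hence whenever Step \ref{fin} actually returns, $f_1$ is absolutely irreducible and $\LL$ is its minimal field of definition over $\QQ$, completing the verification.
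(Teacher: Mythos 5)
Your proposal pursues a genuinely different route from the paper. The paper's own argument is short: it observes that $f_1(x_0,Y)=F_1(x_0,Y)$ is an irreducible factor of $f(x_0,Y)$ in $\LL[Y]$, and the Preprocessing step guarantees $f(x_0,Y)$ is irreducible over $\QQ$; this specialization argument directly yields that $f_1$ is irreducible in $\LL[X,Y]$, and minimality of $\LL$ then follows from the pure degree count $\deg_Y f_1=m$, $\deg q=s$, $sm=n$. You do not use the Preprocessing invariant at all; instead you pass to $\overline\QQ[X,Y]$, invoke unique factorization to write $f_1 = c\prod_{j\in J}g_j$ with $|J|=r/s$, and try to force $|J|=1$ via a comparison between the modular factorization and the absolute one.

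There is a concrete gap in the step that is supposed to close your argument. Under good reduction, the absolute factors of $f\bmod p$ over $\overline{\FF_p}$ are the reductions of the $r$ absolute factors of $f$, all of degree $n/r$, and every $\FF_p$-irreducible factor of $f\bmod p$ is a product of some subset of these; hence each $\FF_p$-factor has degree a \emph{multiple} of $n/r$, so the smallest one has degree $m\ge n/r$, giving $s=n/m\le r$. You assert the reverse, $m\le n/r$ hence $s\ge r$, by saying the $\FF_p$-factorization ``refines'' the absolute one --- the refinement actually goes the other way. With the correct inequality you only obtain $s\le r$, which is the same direction as $|J|=r/s\ge1$, and the two together do not force $s=r$. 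The fallback claim that in any ``misalignment'' one of the built-in sanity tests (condition~\eqref{recogn}, irreducibility of $q$, existence of $F_1$, final divisibility) must fail is asserted but not argued; nothing you have written rules out all checks passing while $s<r$, in which case $f_1$ would be a proper product of conjugate absolute factors. To repair the proof you would need either the paper's specialization argument at $x_0$ (using the Preprocessing irreducibility of $f(x_0,Y)$ over $\QQ$), or the absolute-irreducibility certificate the paper invokes in the remark following the proposition, namely that $f_1\bmod p = F^{(1)}$ is irreducible in $\FF_p[X,Y]$ and satisfies condition $(C)$, whence $f_1$ is absolutely irreducible by Proposition~\ref{test} and $r=s$ follows.
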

\begin{proof}
Since it is a Las Vegas algorithm, this algorithm is probably fast and always correct but the answer can be ``I don't know''. So we just have to check that a given positive answer is correct.

The starting point of the proposed algorithm, as in the irreducibility test, is to determine a prime $p$ such that the reduction  modulo $p$ kills the evaluation of $f$ on an integer point $(x_0,y_0)$. Then the constant term of the minimal polynomial of $\alpha:=f_1(x_0,y_0)$  vanishes modulo $p$. Such a $p$ is easily found. However we rely on randomness to expect with a good probability that $\LL=\QQ(\alpha)$  and that $f$ has good reduction modulo $p$ (using  Proposition \ref{noether} and Lemma \ref{rootinQp}).

In the algorithm described above, we inserted  some checks and a loop to change $p$ if it is an ``unlucky'' choice.
The algorithm can be made deterministic (but less efficient) by considering a large testing set for $(x_0,y_0)$ and take $p$ not dividing a huge constant $\mathcal{B}$ computed a la Trager, to avoid bad reduction. We would be able to do this thanks to Lemma \ref{evite_B}.

The output of the algorithm, the factor $f_1$, is irreducible in $\LL[X,Y]$. Indeed, $f_1(x_0,Y)=F_1(x_0,Y)$ and $F_1(x_0,Y)$ is irreducible in $\LL[Y]$ because of the irreducibility of $f(x_0,Y)$ in the Preprocessing Step. Furthermore, the extension $\LL$ is minimal. Indeed, at the end of the algorithm we have $\deg_Y f_1=m$, $\deg q=s$ and $s.m=n$, see the definition of $s$ in Step \ref{facto_dans_algo}.
\end{proof}

Remark: $f_1$ is irreducible modulo $p$ and $f_1$ modulo $p$ generically satisfies condition $(C)$, so Proposition \ref{test} guaranties the absolute irreducibility of $f_1$ in $\LL[X,Y]$.

\subsection{Parallel version of the Algorithm}\label{parallVers}
 In step (\ref{factors})  of the Abs-Fac Algorithm we perform a factorization of $f(x_0,Y)$ in the polynomial ring $\LL[Y]$. Then in Step  (\ref{fin})  we use Hensel liftings to reconstruct the factor $f_1$. If we use parallel calculus in these steps, we can perform $(m+1)$ Lagrange interpolations to reconstruct the factor $f_1$. We have to assume that in the factorization of $f(x_0,Y)$ in $\LL[Y]$ there is only one factor of degree $m$. This is not always verified, for instance if the extension $\LL$ is normal we may have several factors of the same degree $m$.

We write the absolute factor $f_1$ as
\[
 f_1(X,Y)=Y^m+\sum_{k=0}^{m-1}\sum_{i+j=k} a_{i,j}^{(1)}X^i Y^j= Y^m+\sum_{j=0}^{m-1} b_{j}(\alpha,X) Y^j,
\]
where $b_j(Z,X)\in \QQ[Z,X]$ of degree $\leq m-j$ and $\alpha$ is a root of the polynomial $q(T)$ found in step (4).

We then want to find the polynomials $b_j(\alpha,X)$.

We substitute steps (\ref{factors}) and (\ref{fin}) with the following procedure:
\begin{itemize}
 \item[(5bis)]  Denote by $\alpha$ a root of $q(T)$ (i.e. the command RootOf in Maple).\\
 Choose  points $x_1,\dots,x_m\in \ZZ$, $x_i\neq x_0$ for $i=1,\dots,m$ such that $f(x_i,Y)$ is rationally irreducible.\\
Compute the factorization of $f(x_i, Y)$ in $\LL[Y]$ and choose $F_{1,0}(Y)$ from the factorization of $f(x_0,Y)$ as in step (5) of the algorithm and $F_{1,j}(Y)$ a factor of minimal degree $m$ in the factorization of $f(x_j,Y)$.
\item[(6bis)] Write $F_{1,j}(Y)$ as follows
\[
 F_{1,j}=\sum_{i=0}^m \gamma_{i,j}Y^j \text{ with }\gamma_j\in \LL.
\]
We then construct the polynomials $b_j(\alpha,X)$ of degree $j$ using Lagrange interpolation \citep[Section 3.1]{burden} on the set of nodes $\gamma_{0,j},\dots,\gamma_{j,j}$.
In this way we determine a candidate for $f_1(X,Y)$ in $\LL[X,Y]$. We check that it divides $f(X,Y)$.
Else go to the Preprocessing step (choosing a point $(x_0,y_0)$ not yet used and a different prime $p$).
\end{itemize}

\medskip

The advantage of steps (5bis) and (6bis) is that in this way this part of the algorithm can be naturally parallelized and do not saturate the memory.

\subsection{Hilbert's Irreducibility Theorem}\label{hilbert}

In the preprocessing step we check that $f(x_0,Y)$ is irreducible. This situation happens very often in practice. With a more theoretical point of view, we know that there exists an infinite number of $x_0 \in \ZZ$ such that $f(x_0,Y)$ is irreducible, thanks to Hilbert's irreducibility theorem. There  exists bounds for this theorem but unfortunately they are very big, see \citet{DebWal}. 

Here we now use a weaker condition on the choice of $(x_0,y_0)$ that allows us to reconstruct the factor $f_1(X,Y)$ even if $f(x_0,Y)$ is not rationally irreducible.

Choose an integer point $(x_0,y_0)\in \ZZ^2$  such that $x_0$ is not a root of the polynomial $\Delta(X)=\mathrm{disc}_Y(f(X,Y))$ and choose an integer $p$ such that $\Delta(x_0) \mod p\neq 0$.
 With this choice of $(x_0,y_0)$ we are sure that the univariate polynomial $f(x_0,Y)$ has no multiple roots in $\QQ$ nor in $\FF_p$.

We do not assume that $f(x_0,Y)$ is rationally irreducible. We computed the factorization $\mod p$
\[
 f(X,Y)=F(X,Y)\cdot G(X,Y) \in \FF_p[X,Y] \quad \deg F=m.
\]
Thanks to the choice of $p$ as in step (\ref{stepchoice}) of the algorithm, $F(X,Y)$ should be equal $\mod p$ to the researched absolute factor $f_1(X,Y)$ of $f$.

After applying step (\ref{factors}), we get the following factorization
\begin{equation}\label{nonirr}
 f(x_0,Y)=\psi_1(Y)\cdots \psi_r(Y)\in \QQ(\alpha)[Y]
\end{equation}
and need to find the set of indexes $I\subseteq\{1,\dots,r\}$ such that
\begin{equation}\label{nonirrf1}
 \prod_{i\in I}\psi_i(Y)=f_1(x_0,Y).
\end{equation}
We reduce $\mod p$ the equalities (\ref{nonirr}) and (\ref{nonirrf1}). We  obtain that $j \in I$ if and only if $\psi_j \mod p$ divides $F(x_0,Y) \mod p$.

\section{Examples and practical complexity}

We tested our algorithm on several examples, using (probably non-optimal) routines implemented in Maple 10.

We focused on the construction of the minimal polynomial $q(T)$ of $\alpha$, that is on the construction of the splitting field $\QQ(\alpha)$; in fact the last part of the algorithm ($X$-adic Hensel lifting or Lagrange interpolation)  depends strongly on the used software.

The procedures, data and Maple files of several examples are available at \\
http://math.unice.fr/$\sim$cbertone/ 

Here we list some remarks about  both the strong and the weak points of our algorithm arising from the computed examples.

 \noindent $\bullet$ In general the algorithm is quite fast: it took around 30 sec (factorization $\mod p$, Hensel lifting, construction of the minimal polynomial) to compute the polynomial $q(T)$ starting from a polynomial of degree 200, with 10 absolute factors of degree 20 each.

\noindent $\bullet$ If possible, it seems to be a good idea to choose a ''small'' prime $p$ (in this way we can gain some time in the $\mod p$-factorization). If  the integers dividing $f(x_0,y_0)$ are quite big, it may be better to go back to the preprocessing step.

\noindent $\bullet$ On examples of high degree, the most of the time is spent for the construction of the minimal polynomial from the approximation $\overline \alpha$. In our tests, we used the $LLL$ function of Maple, but we may speed up this part of the computation using more performing algorithms for $LLL$ (for example, see \citet{NS} and \citet{Sch}).

\noindent $\bullet$ For the computation of the $p$-adic Hensel Lifting, we have implemented a small procedure in Maple, both for the linear and the quadratic one, which can deal also with non-monic polynomials \citep[Algorithm 15.10]{VZG}.

\subsection*{Benchmark}

We consider random polynomials $g_1 \in \mathbb Q[x,y,z]$ and $g_2\in \mathbb Q[z]$, of degrees $d_1$ and $d_2$ resp. both rationally irreducible. We compute $f(X,Y)=Res_z(g_1,g_2)$. In this way we obtain an irreducible polynomial $f(X,Y)\in \mathbb Q[x,y]$, monic in $y$, of degree $d_1\cdot d_2$ with $d_2$ absolute irreducible factors each of degree $d_1$. 

\medskip

The polynomials $g_1$ and $g_2$ used are listed in the file ``Polynomials.mws''.

\medskip

Here we summarize the time needed to obtain $q(T)$, the minimal rational polynomial of $\alpha$, such that the absolute factors of $f(X,Y)$ are in $\LL[x,y]$, $\LL=\mathbb Q(\alpha)=\mathbb Q[T]/q(T)$ and we made a few remarks about the strategy one may adopt (for instance the choice of the prime).

In almost all of the examples, we compute the Hensel lifting both with the linear and the quadratic algorithm, this is why we always chose as level of accuracy a power of $2$.

In the first 2 examples, we also computed the factorization of $f(x_0,Y)$ in $\QQ(\alpha)$.

In the first example, we computed the factor $f_1(X,Y)$ using Lagrange Interpolation

To repeat the examples, one need to change at the beginning of each Maple file the location of the file ``proc.txt'', in which there are (non-optimal) implementations for linear and quadratic Hensel Lifting (for non monic polynomials) and a procedure to compute the minimal polynomial of a $p$-adic approximation of $\alpha$ using the $LLL$ algorithm.

The names of kind ``Example1.2.mws'' refer to the Maple files on the website.

\begin{exmp}

$f(X,Y)$ rational irreducible polynomial of degree 50 with 5 absolute factors of degree 10.

We need 1.5 sec to construct the example and factor $f(0,0)$. We construct the minimal polynomial defining the field extension for 2 different choices of $p$.

\medskip

Example1.1.mws: we choose $p=11$.

$\bullet$ Time to factor $f(X,Y) \mod\, p$: $0.131$ sec.

The estimation of the level of accuracy that ensures the correct computation of $q(T)$ is in this case 338; we choose to lift the factorization to the level $p^{256}$.

$\bullet$ Time to lift the factorization $f(0,Y)=g_1(0,Y)g_2(0,Y) \mod\, p$ to a fac\-to\-ri\-za\-tion $\mod p^{256}$, using:\\
Linear Hensel Lifting: less than $1$ sec\\
 Quadratic Hensel Lifting: less than $0.07$ sec.

$\bullet$ Time to find the minimal polynomial of $\alpha$ through its  approximation $ \mod p^{256}$ using $LLL$: $0.22$ sec. 

\medskip

We can complete the algorithm using steps (5bis) and (6bis):\\
we choose 10  nodes $x_1\dots,x_{10}$ randomly and factor the polynomials $f(x_j,Y)$ in $\QQ(\alpha)[Y]$; the longest of these factorization takes  about $219$ sec. Then we use Lagrange Interpolation and obtain $f_1(X,Y)$.\\

Example1.3.mws: if we use the software \emph{Pari GP}, applying the function \emph{polred()} to the obtained polynomial $q(T)$, we get $q_1(Z)$ which defines the same algebraic extension as $q(T)$ but has smaller coefficients. In this way, the factorization of $f(0,Y)$ in $\QQ(\alpha)$ took only 8 sec, but the computation of the polynomial $q_1(Z)$ in Pari GP took more than 360 sec! \qed

\end{exmp}

\begin{exmp}

$f(X,Y)$ rational irreducible polynomial of degree 400 with 20 absolute factors of degree 20.

We need around $1260$ sec to construct the example and factor $f(0,0)$. 

\medskip

Example6.1.mws: we choose $p=53259165137$.

$\bullet$ Time to factor $f(X,Y) \mod p$: $1924$ sec.

The estimation of the level of accuracy that ensures the correct computation of $q(T)$ is in this case 398; we choose to lift the factorization to the level $p^{256}$.

$\bullet$ Time to lift the factorization $f(0,Y)=g_1(0,Y)g_2(0,Y) \mod p$ to a fac\-to\-ri\-za\-tion $\mod p^{256}$, using \\
Linear Hensel Lifting: less than $365$ sec\\
 Quadratic Hensel Lifting: less than  $39$ sec.

$\bullet$ Time to find the minimal polynomial of $\alpha$ through its  approximation $ \mod p^{256}$ using $LLL$: $1024$ sec. 

\medskip
In order to compare the time needed for the construction of $q(T)$ computing modulo a ``small'' prime, we considered also the case with $p=89$ dividing $f(-1,0)$. In this case we obtained (Example6.2.mws):

$\bullet$ Time to factor $f(X,Y) \mod p$: $127$ sec.

The estimation of the level of accuracy that ensures the correct computation of $q(T)$ is in this case 2194; we choose to lift the factorization to the level $p^{1024}$.

$\bullet$ Time to lift the factorization $f(0,Y)=g_1(0,Y)g_2(0,Y) \mod p$ to a fac\-to\-ri\-za\-tion $\mod p^{1024}$, using \\
Linear Hensel Lifting:  $737$ sec\\
 Quadratic Hensel Lifting:  $24$ sec.

$\bullet$ Time to find the minimal polynomial of $\alpha$ through its  approximation $ \mod p^{1024}$ using $LLL$: $520$ sec. \qed 

\end{exmp}

For the detail of other examples, see http://math.unice.fr/$\sim$cbertone/

In the following table we resume the timings of a few  more examples.

\begin{itemize}
 \item  $n=\tdeg(f)$, $s$=number of absolute factors of $f$, $m=n/s$=degree of an absolute factor of $f$;
\item $p$= prime integer, $\lambda=$ level of accuracy of Proposition \ref{LLLp}, $\tilde\lambda=$chosen level of accuracy;
\item $T_1=$ time to factor $f(X,Y) \mod p$, $T_2=$time to lift the factorization to $p^{\tilde\lambda}$, $T_3=$time to find the minimal polynomial of $\alpha$.
\end{itemize}

\medskip

\begin{center}
\begin{tabular}{|c|c|c|c||c|c|c||c|c|c|}
\hline
$Example$ & $n$ & $s$  & $m$ & $p$ &  $\lambda$ & $\tilde \lambda$ & $T_1$ & $T_2$ & $T_3$\\
\hline
\hline
Example 1.1 & 50 & 5 & 10 & 11 & 338 & 256 & 0.13 s & 0.07 s & 0.22 s\\
\hline
Example 1.2 & 50 & 5 & 10  & 307 & 141 & 128 & 0.13 s & 0.08 s & 0.4 s\\
\hline
Example 2.1 & 100 & 10 & 10 & 7 & 1105 & 512 & 3.4 s & 0.3 s & 2.25 s\\
\hline
Example 2.2 & 100 & 10 & 10 & 655379 & 160 & 128 & 6.2 & 0.4 s & 5.7 s\\
\hline
Example 3.1 & 150 & 15 & 10 & 7 & 2246 & 1024 & 10 s & 1.08 s & 21 s \\
\hline
Example 4.1 & 200 & 10 & 20 & 47 & 853 & 512 & 33 s & 2.8 s & 14 s\\
\hline
Example 4.2 & 200 & 10 & 20 & 114041 & 282 & 256 & 128 s & 3.8 s & 30 s\\
\hline
Example 5 & 200 & 20 & 10 & 7682833 & 457 & 256 & 68 s & 3.8 s & 220 s\\
\hline
Example 6.1 & 400 & 20 & 20 & 53259165137 & 398 & 256& 1924 s & 39 s & 1024 s\\
\hline
Example 6.2 & 400 & 20 & 20 & 127 & 2194 & 1024 & 127 s & 24 s & 520 s\\
\hline
Example 7 & 100 & 20 & 5 & 7 & 3029 & 2048 & 0.64 s & 1.25 s & 205 s\\
\hline
\end{tabular}\\
\end{center}

\section{Conclusion}

In this paper we have presented  a new approach to absolute factorization improving the use of classical tools, in particular the TKTD algorithm and $LLL$ algorithm.

In fact, we have refined the main idea of the TKTD algorithm (\citet{TrDv}, \citet{K1}, \citet{Trager2}), because we  construct a ``small'' algebraic extension field in which the polynomial $f(X,Y)$ splits. However the degree of the extension constructed by our algorithm is minimal, i. e. the number of absolute factors. In the TKTD algorithm the degree of the used extension is the degree of the polynomial $f(X,Y)$.

Furthermore, we use the $LLL$ algorithm in a new way to define the field extension, while its classical applications are on the coefficients of a univariate rational polynomial in order to factor it \citep{LLL}, or on the exponents (see \citet{vanH} and \citet{Che}).

In our application, $LLL$ is used on a lattice defined by $s+1$ vectors, where $s$ is the number of absolute factors of the polynomial, which is  smaller than the degree of the polynomial to factor. That is why in our algorithm the use of $LLL$ is not a bottleneck. 

Nevertheless, we may improve the fastness of the computations using, if it will be available in the future,  a fast $LLL$ (see \citet{NS} and \citet{Sch}) and a good implementation of the Polred algorithm \citep{Cohen}, which allows a better presentation of the algebraic field extension.

Our Maple prototype was able to deal  with high degree polynomials (up to 400), which were so far out of reach of all other absolute factorization algorithm; furthermore it is very fast on polynomials of middle degrees (about 100).

An efficient implementation of our algorithm will also need  good $p$-adic and $X$-adic Hensel liftings. We expect, in a near future, that the library Mathemagix \citep{Mathemagix}  will provide  optimized implementations of these routines. Another point to improve is the parallel version of the algorithm, in order to be able to deal also with normal extensions of $\QQ$.

Another related direction of research that we will soon explore, is extending some of these techniques to the decomposition of affine curves in dimension 3 or more.

\bigskip

\bigskip

\section*{Acknowledgments}
The authors would like to thank Gr\'egoire Lecerf for useful discussions and valuable suggestions concerning this paper.

\bibliographystyle{elsart-harv}
\bibliography{biblio}


\end{document}